\documentclass[12pt,reqno]{amsart}

\usepackage[margin=1in]{geometry}
\usepackage[dvipsnames]{xcolor}
\usepackage[tt=false]{libertine}

\usepackage{amssymb}

\usepackage[varbb]{newpxmath}
\let\savedbigtimes\bigtimes
\let\bigtimes\relax
\usepackage{mathabx} 
\let\bigtimes\savedbigtimes

\usepackage{pgfplots}
\pgfplotsset{compat=1.16}
\usetikzlibrary{arrows.meta,positioning,calc,decorations.pathreplacing}
\usepgfplotslibrary{fillbetween}

\usepackage{graphicx}
\usepackage{enumerate}
\usepackage{bm}

\usepackage{bbm}

\usepackage{verbatim}
\usepackage{hyperref}
\usepackage[capitalize,nameinlink]{cleveref}
\hypersetup{
	colorlinks=true,
	pdfpagemode=UseNone,
    citecolor=OliveGreen,
    linkcolor=NavyBlue,
    urlcolor=black,
	pdfstartview=FitW
}
\usepackage{appendix}
\crefname{appsec}{Appendix}{Appendices}
\usepackage{tikz}

\theoremstyle{plain}
\newtheorem{theorem}{Theorem}[section]
\newtheorem{proposition}[theorem]{Proposition}
\newtheorem{lemma}[theorem]{Lemma}

\theoremstyle{definition}

\newtheorem*{assumption*}{Assumption}

\theoremstyle{remark}

\crefname{lemma}{Lemma}{Lemmas}
\crefname{theorem}{Theorem}{Theorems}
\crefname{definition}{Definition}{Definitions}
\crefname{fact}{Fact}{Facts}
\crefname{claim}{Claim}{Claims}
\crefname{proposition}{Proposition}{Propositions}

\newcommand{\E}{\mathbb{E}}

\DeclareMathOperator*{\argmax}{arg\,max}

\makeatletter
\newcommand{\vast}{\bBigg@{4}}
\newcommand{\Vast}{\bBigg@{5}}
\makeatother

\renewcommand{\epsilon}{\varepsilon}

\newcommand{\PP}{\mathbb{P}}
\newcommand{\EE}{\mathbb{E}}

\newcommand{\beq}{\begin{equation}}
\newcommand{\eeq}{\end{equation}}

\begin{document}

\title[A Bayesian Proof of Talagrand's Majorizing Measure Theorem]{A Bayesian Proof and Interpretation \\ of Talagrand's Majorizing Measure Theorem}

\author
[Ilias Zadik]{Ilias Zadik$^{\circ}$}

\thanks{\raggedright$^\circ$Department of Statistics and Data Science, Yale University.\\
Email: \texttt{ilias.zadik@yale.edu}}

\date{\today}

\subjclass[2020]{Primary 60G15; Secondary 46B09, 62F15, 94A34, 94A15.}

\maketitle

\begin{abstract}
In this paper, we give a short Bayesian proof of Talagrand’s celebrated majorizing-measure theorem (MMT). While the upper-bound direction of MMT follows relatively directly from standard arguments, the lower-bound direction is widely regarded as the more difficult part and has received several distinct proofs. Unlike previous approaches, our proof does not rely on existing Gaussian processes lower bounds techniques, nor on combinatorial, geometric, or coding-theoretic constructions. Instead, we derive the lower bound from two area identities for Gaussian additive models. We show that the Gaussian width of a finite set is the integrated mean-squared error of the maximum-likelihood estimator (MLE), while the integrated minimum mean-squared error (MMSE) is larger than the Fernique--Talagrand functional, up to a universal constant. Simply then comparing the MLE with Bayes-optimal estimation, combined with a recent duality minimax argument by Liu, gives a direct proof of the hard direction of MMT.

\end{abstract}

\section{Introduction}
 Talagrand in \cite{Talagrand1987} famously proved the majorizing measure theorem. The importance of this theorem is widely highlighted across the probability theory literature; see, for instance, the discussion surrounding Talagrand's 2024 Abel Prize \cite{guedon2024talagrand}. The theorem is as follows.

\begin{theorem}\label{thm:tal}\cite{Talagrand1987}For any centered and separable Gaussian process $(G_t)_{t \in T}$ when $T$ is endowed with the canonical pseudo-metric \[
    d(s,t)^2=\E(G_s-G_t)^2
\]  then it holds for universal constants $0<c<C$ that \[
    c\mathcal{M}(T,d) \leq \E\sup_{t\in T}G_t \leq C\mathcal{M}(T,d),
\]where $\mathcal{M}(T,d)$ is the Fernique-Talagrand functional
\begin{align}\label{eq:maj}
    \mathcal{M}(T,d)=\inf_{\mu\in\mathcal{P}(T)}\sup_{t\in T}
    \int_0^{\mathrm{diam}(T)}\sqrt{\log\frac1{\mu(B(t,r))}}\,dr
\end{align} 
\end{theorem}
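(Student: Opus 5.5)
We follow the strategy announced in the abstract. First we reduce to a finite index set. By separability and monotone convergence, $\E\sup_{t\in T}G_t=\sup_{F\subset T \text{ finite}}\E\max_{t\in F}G_t$. On the functional side, we need the lower semicontinuity statement $\mathcal{M}(T,d)\lesssim\sup_F \mathcal{M}(F,d)$, which follows by a standard compactness argument on the space of measures (for unbounded or non-totally-bounded $T$ both sides are infinite). For finite $F$ we realize the process isometrically as $G_t=\langle g,t\rangle$ with $g\sim N(0,I_n)$ and $F\subset\mathbb{R}^n$. The upper bound $\E\max_F G_t\le C\mathcal{M}(F,d)$ comes from the usual chaining argument: fix a near-optimal $\mu$, take for each $k$ a partition built from balls of radius $\mathrm{diam}(T)2^{-k}$, and apply a union bound with weights $\mu(B(t,2^{-k}\mathrm{diam}(T)))$. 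This direction is routine, so the work is in the lower bound.

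For the lower bound we consider the Gaussian additive model $Y=\sqrt{\lambda}\,X+g$, where $X\in F$ is the signal and the signal-to-noise ratio $\lambda$ ranges over $(0,\infty)$. The first area identity concerns the MLE $\hat X_\lambda=\argmax_{x\in F}\bigl(\langle Y,x\rangle-\tfrac{\sqrt\lambda}{2}\|x\|^2\bigr)$. We would show that
\[
\int_0^\infty \E\|\hat X_\lambda-X\|^2\,\frac{d\lambda}{2\sqrt\lambda}
\]
equals the Gaussian width $\E\max_{t\in F}\langle g,t\rangle$, up to a harmless normalization. The method is to differentiate in $s=\sqrt\lambda$ the quantity $\Phi(s)=\E\max_x(\langle g,x\rangle+s\langle X,x\rangle-\tfrac s2\|x\|^2)$. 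An envelope theorem computation gives $\Phi'(s)=\E[\langle X,\hat X\rangle-\tfrac12\|\hat X\|^2]=\tfrac12\|X\|^2-\tfrac12\E\|\hat X-X\|^2$. We then integrate from $0$ to $\infty$, using $\Phi(0)=\E\max_x\langle g,x\rangle$ and the fact that $\Phi(s)-\tfrac s2\|X\|^2\to 0$ as $s\to\infty$, because the MLE becomes exact. This holds pointwise in $X$, and averaging over any prior $\pi$ on $F$ preserves the identity. The second identity is the I-MMSE relation: for any prior $\pi$,
\[
\int_0^\infty \mathrm{mmse}_\pi(\lambda)\,\frac{d\lambda}{2\sqrt\lambda}
\]
is expressible through mutual information $I(X;Y_\lambda)$ integrated against an appropriate weight. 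Since the MMSE is no larger than the MLE error, we get
\[
\E\max_F G_t\ \ge\ \sup_\pi \int_0^\infty \mathrm{mmse}_\pi(\lambda)\,\frac{d\lambda}{2\sqrt\lambda}.
\]

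It remains to show that for a suitable prior $\pi$ the integrated MMSE is at least $c\,\mathcal{M}(F,d)$, and this is the main obstacle. First we invoke Liu's minimax duality, which rewrites $\mathcal{M}(F,d)$, up to constants, as $\sup_{\pi}\inf_{\mu}\int\!\int_0^{\mathrm{diam}}\sqrt{\log(1/\mu(B(t,r)))}\,dr\,d\pi(t)$, so that a prior $\pi$ is available against which every $\mu$ does poorly on average. Next we need a pointwise lower bound on the posterior error. We change variables $r\leftrightarrow \lambda$ with $r\asymp \lambda^{-1/2}\sqrt{\log(1/\mu(B))}$ and use Fano's inequality scale by scale: at SNR $\lambda$, the posterior cannot localize $X$ within radius $r$ unless $\lambda r^2\gtrsim \log(1/\pi(B(X,r)))$. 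A natural choice is the posterior predictive $\mu=\E\,\mathrm{Law}(X\mid Y_\lambda)$ at each level. This converts $\mathrm{mmse}_\pi(\lambda)\gtrsim \E_\pi\, r_\lambda(X)^2$, where $r_\lambda(t)$ solves $\lambda r^2\asymp\log(1/\mu(B(t,r)))$, and integrating in $\lambda$ against $d\lambda/\sqrt\lambda$ recovers $\int_0^{\mathrm{diam}}\sqrt{\log(1/\mu(B(t,r)))}\,dr$.

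The delicate points are that the measure $\mu$ must be chosen independently of $\lambda$ for the duality to apply, which we would handle by mixing the posterior marginals over a dyadic grid of $\lambda$, and that the constants in the Fano step must not degrade across scales. If the direct Fano bound fails, we would fall back on comparing $\mathrm{mmse}$ with the error of a two-point or packing test at each dyadic scale.
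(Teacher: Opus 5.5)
Your finite reduction, the MLE area identity (your envelope computation in $s=\sqrt\lambda$ is exactly Proposition~\ref{prop:maparea}), and the comparison of the MMSE with the MLE error all match the paper. The gap is the last step. You flag it as the main obstacle but do not close it, and the mechanism you sketch for it fails, for two reasons.

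First, the localization principle ``the posterior cannot localize $X$ within radius $r$ unless $\lambda r^2\ge c\log(1/\pi(B(X,r)))$'' is false. Take $\pi$ uniform on $\{a,b\}$ with $\|a-b\|=\delta$. Once $\lambda\delta^2\gg1$, the posterior identifies $X$ exactly with probability $1-e^{-c\lambda\delta^2}$, while $\lambda r^2\to0<\log 2$ as $r\to0$. Accordingly $\mathrm{mmse}_\pi(\lambda)$ is exponentially small rather than of order $1/\lambda$. Localization is governed by the distance to the competitors, not by $r$. Fano-type arguments only control $I(X;Y_\lambda)\le\frac{\lambda}{2}\E\|X-\E X\|^2$ at the diameter scale of a packing. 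This is why scale-by-scale Fano or packing tests (your fallback) give only Sudakov/Dudley-type bounds, which on their own do not capture $\mathcal M$.

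Second, as written, the ``posterior predictive'' $\E\,\mathrm{Law}(X\mid Y_\lambda)$ equals $\pi$ for every $\lambda$. So the dyadic mixing is vacuous, and your plan amounts to proving $\int_0^\infty\mathrm{MMSE}_\pi(s)\,ds\ge c\,\E_\pi\int_0^{\mathrm{diam}}\sqrt{\log(1/\pi(B(X,r)))}\,dr$. This is false in general. For $\pi(\{a\})=1-\epsilon$, $\pi(\{b\})=\epsilon$, the left side is of order $\epsilon\delta\sqrt{\log(1/\epsilon)}$: the error stays $\approx\epsilon\delta^2$ until $s\delta\approx\sqrt{2\log(1/\epsilon)}$ and then decays. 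The right side is at least of order $\delta\sqrt{\epsilon}$. You give no reason the inequality should hold for the particular maximizing $\pi$.

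The missing idea is to bypass pointwise or scale-by-scale bounds altogether.
\begin{enumerate}
\item By the Nishimori identity, a posterior sample $\hat X$ yields a self-coupling of $\pi$ with $\E\|X-\hat X\|^2=2\,\mathrm{MMSE}_\pi(s)$. By data processing, $I(X;\hat X)\le I_\pi(s)$. Hence $2\,\mathrm{MMSE}_\pi(s)\ge D_\pi(I_\pi(s))^2$ (Lemma~\ref{lem:mmseRD}).
\item Insert this into the I-MMSE relation $I_\pi'(s)=s\,\mathrm{MMSE}_\pi(s)$, which you mention but never actually use. This gives $((I_\pi^{-1})^2)'\le 4/D_\pi(A)^2$ on $[0,A]$, hence $1/I_\pi^{-1}(A)\ge D_\pi(A)/(2\sqrt A)$.
\item Since $\int_0^\infty\mathrm{MMSE}_\pi\,ds=\int_0^{H(\pi)}dA/I_\pi^{-1}(A)$, a layer-cake identity gives $\int_0^\infty\mathrm{MMSE}_\pi\,ds\ge\int_0^{\mathrm{diam}(T)}\sqrt{R_\pi(r)}\,dr$.
\item The passage from $R_\pi$ to $\mathcal M(T,d)$ is Liu's theorem: drop the output-marginal constraint, then apply the Gibbs variational formula and Sion. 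There the right $\mu$ is produced by the minimax rather than chosen by hand.
\item Liu's bound carries an additive $-C\,\mathrm{diam}(T)$ loss. It is absorbed using $\mathcal W(T)\ge\mathrm{diam}(T)/\sqrt{2\pi}$ (Lemma~\ref{lem:max}), a step your proposal also lacks.
\end{enumerate}
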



The original proof of this celebrated theorem has often been viewed as opaque, and a substantial effort has gone into finding alternative proofs that offer further insight. As a result, several distinct and very interesting proofs of the majorizing-measure theorem are now available. Since the upper-bound direction follows from relatively direct arguments, these proofs focus mainly on the more challenging lower-bound direction. Talagrand himself gave multiple proofs based on greedy combinatorial constructions together with the Sudakov minoration theorem, leading to the important framework of generic chaining \cite{Talagrand1987,Talagrand1992,Talagrand1996}. Later, van Handel gave a short interpolation proof of the lower bound using a contraction principle \cite{vanHandel2016, vanHandel2018}, while Borst et al.~\cite{Borst2021} developed a related approach based on convex optimization and primal-dual tree certificates. More recently, the theorem was recast in coding-theoretic terms, leading to a proof based on variable-length multiscale codes, Kraft's inequality, and Sudakov-type lower bounds \cite{ChuRaginskyISIT2023}. Most recently, Liu developed a rate-distortion equivalent of the functional \(\mathcal M(T,d)\) and proved the theorem using a lifting method together with Fernique's sharpness of Dudley's entropy integral for stationary processes \cite{Fernique1975,Liu2025}.

In this paper, we present a new proof \emph{of the lower bound} based on a Bayesian statistical argument. Our proof does not rely on Gaussian-process lower-bound tools such as Sudakov minoration, nor does it involve a combinatorial, geometric, or coding-theoretic construction. Instead, it shows that the theorem follows cleanly by comparing the integrated mean-squared error of the maximum-likelihood estimator (MLE) with the integrated minimum mean-squared error (MMSE) in a simple Bayesian Gaussian additive model. The key technical tools we use are standard Bayesian identities, including the Nishimori identity and the I-MMSE formula \cite{GuoShamaiVerdu}, together with an area identity connecting the mean-squared error of the MLE and the Gaussian width of a set (Proposition \ref{prop:maparea}). Figure~\ref{fig:area} illustrates the main skeleton of the proof.

\section{The proof}

\subsection{Getting started}\qquad

\subsubsection{Suffices to consider finite $T$.}
Our proof first notices that by separability of $T$, we may assume $T$ is finite which we assume from now on. This appears to be standard in the literature, but, for completeness, we include here the full reduction argument in Appendix \ref{sec:finite-to-separable}. Moreover, we may assume without loss of generality there is no $t,s \in T$ with $s \neq t$ such that $G_t=G_s$ almost surely.

In particular, since $T$ can be assumed to be finite, we employ the following lemma to realize the Gaussian process in a finite dimensional Euclidean space.

\begin{lemma}\label{lem:finite_euclidean}
Let $T$ be finite and let $(G_t)_{t\in T}$ be a centered Gaussian process. For $N=|T|,$ there exist $N$ \emph{distinct} vectors $(h_t)_{t\in T}$ in $\mathbb{R}^N$ such that, for $Z\sim N(0,I_N)$,
\[
    (G_t)_{t\in T}\stackrel{d}= (\langle Z,h_t\rangle)_{t\in T},
\]
and
\[
    d(s,t)=\|h_s-h_t\|_2.
    \]
\end{lemma}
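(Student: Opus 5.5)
The plan is to realize the process through a square root of its covariance matrix. Let $\Sigma\in\mathbb{R}^{N\times N}$, indexed by $T\times T$, be the covariance matrix $\Sigma_{st}=\E G_sG_t$. Since $\Sigma$ is symmetric positive semidefinite, it admits a factorization $\Sigma=HH^\top$ with $H\in\mathbb{R}^{N\times N}$; we can take $H=\Sigma^{1/2}$ from the spectral theorem, or a Cholesky factor. We then define $h_t\in\mathbb{R}^N$ to be the row of $H$ indexed by $t$, so that $\langle h_s,h_t\rangle=\Sigma_{st}$.

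For the distributional identity, let $Z\sim N(0,I_N)$. The vector $(\langle Z,h_t\rangle)_{t\in T}=HZ$ is a centered Gaussian vector with covariance $H\,\E[ZZ^\top]H^\top=HH^\top=\Sigma$. A centered Gaussian vector is determined in law by its covariance matrix, and $(G_t)_{t\in T}$ is centered Gaussian with the same covariance $\Sigma$. Hence the two processes are equal in distribution.

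For the metric identity, we expand
\[
d(s,t)^2=\E(G_s-G_t)^2=\Sigma_{ss}-2\Sigma_{st}+\Sigma_{tt}=\|h_s\|^2-2\langle h_s,h_t\rangle+\|h_t\|^2=\|h_s-h_t\|_2^2 .
\]
Taking square roots gives $d(s,t)=\|h_s-h_t\|_2$.

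The only point needing care is distinctness, and here we use the standing reduction that $G_s\neq G_t$ almost surely whenever $s\neq t$. If $h_s=h_t$ for some $s\neq t$, the computation above gives $\E(G_s-G_t)^2=0$. Since $G_s-G_t$ is centered, this forces $G_s=G_t$ almost surely, contradicting the reduction. Therefore the $N$ vectors $h_t$ are distinct. Beyond this observation, there is no real obstacle.
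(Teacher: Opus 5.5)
Your proposal is correct and follows essentially the same route as the paper: factor the covariance as $K=AA^\top$, take the rows as the $h_t$, match covariances to get equality in law, and expand $\|h_s-h_t\|_2^2$ to get the metric identity. Your explicit distinctness argument, using the standing reduction that $G_s=G_t$ almost surely never holds for $s\neq t$, is a welcome addition, since the paper's proof leaves that point implicit.
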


\begin{proof}
Let $K=(K(s,t))_{s,t\in T}$ be the covariance matrix of the Gaussian process, given by $K(s,t)=\E G_sG_t$ for $s,t \in T$.  Since $K$ is positive semidefinite, there exists a matrix $A \in \mathbb{R}^{N \times N}$ such that $K=AA^\top$. We then set $h_t$ to be the row vector of $A$ indexed by $t$.

Note that by definition for all $s,t \in T$ it holds $\langle h_s,h_t\rangle=K(s,t)$. Therefore, the centered Gaussian process $(\langle Z,h_t\rangle)_{t\in T}$ has the same covariance with the centered $(G_t)_{t\in T}$, and therefore the same law. 

Finally, observe that for all $s,t \in T,$
\[
    \|h_s-h_t\|_2^2=K(s,s)+K(t,t)-2K(s,t)=\E(G_s-G_t)^2=d(s,t)^2.
\]
\end{proof}
Using the above lemma, for the rest of the proof we may assume that the process is represented by vectors in Euclidean space. In fact, without loss of generality, we identify each index $t\in T$ with its corresponding vector, and hence assume from now on that $T\subseteq \mathbb{R}^N$ and
\[
    G_t=\langle Z,t\rangle .
\]
Thus the object of interest is the Gaussian width of the convex hull of $T$, and we denote
\[\mathcal{W}(T)=\E\sup_{t\in T}G_t=\E\sup_{t\in T} \langle Z,t\rangle.\]

\subsubsection{ Rate-distortion bound on $\mathcal{M}(T,d)$ }
Next, we leverage a convenient upper bound (up to constants) on the $\mathcal{M}(T,d)$-functional. This very interesting ``duality" connection, while Liu mentions that might be known before \cite{Liu2025} in the literature, to the best of our knowledge is first proved in \cite{Liu2025}. Specifically, in \cite[Section 6]{Liu2025} it is proven that 
for universal constants $0<c',C'$ that \begin{align}\label{eq:liu}
    c'\mathcal{M}(T,d)-C'\mathrm{diam}(T) \leq \sup_{\pi \in \mathcal{P}(T)} \int_0^{\mathrm{diam}(T)} \sqrt{R_\pi(r)}\,dr,
\end{align} where for $0\le r\le \mathrm{diam}(T)$ and any $\pi \in \mathcal{P}(T)$ we define (a self-coupling version of) the rate-distortion function
\[
    R_\pi(r)
    =\inf\bigl\{ I(V;\widehat V): V \sim \pi, \widehat{V} \sim \pi,
        \E\|V-\widehat V\|^2\le r^2\bigr\},
\]
where the infimum is over all couplings of $(V,\widehat V)$\footnote{As customary, in the definition of $ R_\pi(r)$ and throughout the paper, we denote by $H(V)$ the Shannon entropy of a discrete random variable $V$, and by $I(V;\widehat V)=H(V)-H(V|\widehat V)$ the mutual information between two discrete random variables $V$ and $\widehat V$.}. We remark that the proof follows from elementary (but very elegant) calculus arguments and Sion's minimax duality. For reader's convenience, we include the proof in Appendix \ref{sec:integral}.

Now we turn to the following useful elementary observation, which is also stated in \cite[Section 6]{Liu2025} without proof. We prove it here for completeness. Importantly, this observation allows one to ignore the $\mathrm{diam}(T)$-slack term in \eqref{eq:liu}.

\begin{lemma}\label{lem:max}
If \((T,d)\) is a finite metric space then
\[
\mathcal{W}(T)=\mathbb E \sup_{t\in T}G_t \ge \frac{1}{\sqrt{2\pi}}\,\operatorname{diam}(T).
\]
\end{lemma}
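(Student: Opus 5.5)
Pick $s,t\in T$ with $d(s,t)=\operatorname{diam}(T)$; since $T$ is finite, the diameter is attained. Then bound the supremum over $T$ from below by the supremum over the pair $\{s,t\}$, that is, $\sup_{u\in T}G_u\ge \max(G_s,G_t)$ pointwise. Taking expectations gives $\mathcal{W}(T)\ge \E\max(G_s,G_t)$.

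Next, evaluate the two-point maximum exactly using the identity
\[
\max(a,b)=\tfrac12(a+b)+\tfrac12|a-b|.
\]
The process is centered, so $\E(G_s+G_t)=0$. This gives
\[
\E\max(G_s,G_t)=\tfrac12\E|G_s-G_t|.
\]
The difference $G_s-G_t$ is a centered Gaussian with variance $d(s,t)^2$. In the representation $G_u=\langle Z,u\rangle$ of Lemma~\ref{lem:finite_euclidean}, it equals $\langle Z,s-t\rangle\sim N(0,\|s-t\|_2^2)$. Using $\E|X|=\sigma\sqrt{2/\pi}$ for $X\sim N(0,\sigma^2)$, we get
\[
\E\max(G_s,G_t)=\tfrac12\sqrt{2/\pi}\,d(s,t)=\frac{\operatorname{diam}(T)}{\sqrt{2\pi}}.
\]
This is exactly the claimed bound.

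There is no real obstacle here. The only points to check are that the diameter is attained, which holds by finiteness, and that the centering kills the $\tfrac12(G_s+G_t)$ term. If $|T|=1$, the diameter is $0$ and the bound is trivial.
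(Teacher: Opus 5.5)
Your proposal is correct and follows the paper's proof step for step. You restrict to a pair attaining the diameter and apply $\max(a,b)=\tfrac12(a+b)+\tfrac12|a-b|$, so centering removes the first term. Then $\E|G_s-G_t|=\sqrt{2/\pi}\,d(s,t)$ yields exactly $\operatorname{diam}(T)/\sqrt{2\pi}$.
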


\begin{proof}
Write \(D=\operatorname{diam}(T)\). If \(D=0\), the claim is trivial. Assume
\(D>0\). Since \(T\) is finite, there exist \(a,b\in T\) such that
\(d(a,b)=D\). Then
\[
\mathbb E \sup_{t\in T}G_t
\ge \mathbb E \max\{G_a,G_b\}.
\]
Notice
\[
\mathbb E \max\{G_a,G_b\}
=\frac12 \mathbb E (G_a+G_b)+
\frac12 \mathbb E |G_a-G_b|=\frac12 \mathbb E |G_a-G_b|.
\]
But \(G_a-G_b\) is a centered Gaussian random variable with variance $\mathbb E(G_a-G_b)^2=d(a,b)^2=D^2.$
Therefore, $\mathbb E |G_a-G_b|
=
D\,\mathbb E |g|
=
D\sqrt{\frac{2}{\pi}},$
where \(g\sim N(0,1)\). 
\end{proof}

Hence, combining \eqref{eq:liu} and Lemma \ref{lem:max}, to conclude the desired lower bound of Theorem \ref{thm:tal}, it suffices to prove for some universal constant $c_0>0$ and for any $\pi \in \mathcal{P}(T),$

\begin{align}\label{eq:goal}
    c_0 \int_0^{\mathrm{diam}(T)} \sqrt{R_\pi(r)}\,dr  \leq \mathcal{W}(T)=\E\sup_{t\in T}\langle Z,t\rangle. \end{align}We describe now a Bayesian proof of \eqref{eq:goal} for $c_0=\frac{1}{2}.$

\subsection{The statistical model: MLE-width and MMSE-rate-distortion ``area" theorems}\qquad

\subsubsection{The Bayesian statistical model} To prove \eqref{eq:goal} we fix any $\pi \in \mathcal{P}(T)$ and construct a Bayesian Gaussian additive model that $\pi$ plays the role of the \emph{prior}. Specifically, for any signal-to-noise ratio (SNR) $s\ge0$, we assume that the ``signal" $X$ is chosen from the prior $X \sim \pi$ and a statistician observes
\begin{align}\label{eq:GAM}
    Y_s=sX+Z,
\end{align}where $Z \sim N(0,I_N)$. The goal of the statistician is to design an estimator that recovers $X$ from the ``noisy" $Y_s.$

It will be useful for us to focus on the mean-squared error performance of appropriately chosen estimators. For this reason, we define here for any estimator $\hat{A}: \mathbb{R}^N \rightarrow \mathbb{R}^N$ its mean squared error by \[\mathrm{MSE}_s(\hat A)=\EE \| X-\hat{A}(Y_s)\|^2_2.\]

\subsubsection{ A MLE-width area theorem}
We first focus on the so-called maximum likelihood estimator (MLE) 
\[
    \widehat X_s^{\rm MLE}\in
    \argmax_{u\in T} \log\PP(Y_s|u)  = \argmax_{u\in T}
    \left\{\langle Y_s,u\rangle-\frac{s}{2}\|u\|^2\right\},
\]where ties are broken arbitrarily.

It is well-understood in the statistical literature that the performance of convex relaxations of the MLE relates to the Gaussian width of various convex sets, see e.g., the influential works \cite{chatterjee2014new, Tropp2015}.  A key observation in this work is that \emph{for all Gaussian additive models} (i.e., for any prior and any finite $T$) the Gaussian width $\mathcal{W}(T)=\E\sup_{t\in T}\langle Z,t\rangle$ is in fact \emph{equal to} the integrated mean-squared performance of the MLE across all SNR values \footnote{After completing this paper, the author became aware of the recent independent work of Pathak and Zhivotovskiy \cite{pathak2026gaussian}, which proves a closely related area identity for Gaussian width. In particular, their Theorem~2.2 connects the Gaussian width of a closed convex set with the integrated mean-squared error of the least-squares estimator (LSE) over rescalings of that set. Applied, for each \(x\in T\), to the convex set \(\operatorname{conv}(T)-x\), their result gives an alternative route to the MLE area identity used in our Bayesian proof: one can replace Proposition~\ref{prop:maparea}, which concerns the MLE over the finite class \(T\), by the corresponding LSE area identity over the convex hull of \(T\), and then proceed with comparing the LSE over the convex hull of \(T\) with the Bayes-optimal estimator in the next section.}.

\begin{proposition}[Width-MLE area identity]\label{prop:maparea}
For every prior $\pi$ on $T$,
\[
   \mathcal{W}(T)=\frac12\int_0^\infty
   \mathrm{MSE}_s(\widehat X_s^{\rm MLE})\,ds.
\]
\end{proposition}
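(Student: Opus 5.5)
The plan is to condition on the signal, turn the MLE objective into a one-parameter family of suprema indexed by $s$, and recognize $\frac12\|\widehat X_s^{\rm MLE}-x\|^2$ as minus the $s$-derivative of that supremum. Integrating in $s$ then telescopes from the Gaussian width at $s=0$ down to a term of zero mean at $s=\infty$. I will do this pathwise, for fixed $Z$ and fixed $X=x\in T$, and take expectations only at the end.

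Fix $x\in T$ and a realization of $Z$. With $Y_s=sx+Z$ and $u\in T$,
\[
\langle Y_s,u\rangle-\frac s2\|u\|^2=\langle Z,u\rangle-\frac s2\|u-x\|^2+\frac s2\|x\|^2 .
\]
The last term does not depend on $u$, so $\widehat X_s^{\rm MLE}$ maximizes $\langle Z,u\rangle-\frac s2\|u-x\|^2$ over $u\in T$. Define
\[
f(s)=f_{Z,x}(s)=\max_{u\in T}\Bigl\{\langle Z,u\rangle-\frac s2\|u-x\|^2\Bigr\}.
\]
This has three useful properties.
\begin{enumerate}
\item $f$ is a maximum of finitely many affine functions of $s$. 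Hence it is convex, piecewise linear and Lipschitz on bounded intervals, and therefore absolutely continuous.
\item Away from finitely many breakpoints the maximizer is unique and locally constant. There Danskin's envelope formula gives $f'(s)=-\frac12\|\widehat X_s^{\rm MLE}-x\|^2$. In particular, the arbitrary tie-breaking is irrelevant, since it affects only a Lebesgue-null set of $s$.
\item $f(0)=\max_{u\in T}\langle Z,u\rangle$. Also, because $T$ is finite and its points are distinct, every $u\neq x$ has $\|u-x\|^2\ge\delta>0$. So for $s$ large enough (depending on $Z$) the maximum is attained only at $u=x$, and $f(s)=\langle Z,x\rangle$ eventually.
\end{enumerate}

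By the fundamental theorem of calculus for absolutely continuous functions, for each fixed $Z,x$,
\[
\max_{u\in T}\langle Z,u\rangle-\langle Z,x\rangle=f(0)-\lim_{s\to\infty}f(s)=\frac12\int_0^\infty\|\widehat X_s^{\rm MLE}-x\|^2\,ds .
\]
The integral is in effect over a bounded interval, since the integrand vanishes for large $s$. Now take expectations over $Z\sim N(0,I_N)$ and $X=x\sim\pi$. The left side becomes $\mathcal W(T)-\E\langle Z,X\rangle=\mathcal W(T)$, because $Z$ is centered and independent of $X$. The integrand on the right is nonnegative, so Tonelli's theorem lets us swap $\E$ and $\int ds$. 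This gives $\mathcal W(T)=\frac12\int_0^\infty \mathrm{MSE}_s(\widehat X_s^{\rm MLE})\,ds$ for every prior $\pi$.

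The main technical point I expect is justifying the pathwise derivative identity and the limit at infinity. Both are made easy by finiteness of $T$ and distinctness of its points, which the earlier reduction and Lemma~\ref{lem:finite_euclidean} guarantee. These give piecewise linearity in $s$ and a positive gap $\delta$. Measurability of the tie-broken MLE can be ensured by fixing a deterministic tie-breaking order on $T$. Once the pathwise identity is established, the rest is bookkeeping.
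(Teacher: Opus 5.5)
Your proposal is correct and follows the paper's proof essentially step for step. The paper works with $\Phi_{x,z}(s)=f_{z,x}(s)-\langle z,x\rangle$ and uses convexity and piecewise linearity in $s$, Danskin's formula $\Phi_{x,z}'(s)=-\frac12\|\widehat u_s-x\|^2$ at differentiability points, and $\Phi_{x,z}(s)\to 0$ as $s\to\infty$, before taking expectations using that $Z$ is centered and independent of $X$. One minor quibble: your claim that the maximizer is unique away from finitely many $s$ holds only for almost every $Z$, since two distinct $u$ can give identical affine pieces. This is harmless, because at every differentiability point all active maximizers share the same slope $-\frac12\|u-x\|^2$, so the derivative formula holds under any tie-breaking.
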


\begin{proof}
 By expanding $Y_s$ notice that almost surely \[
    \widehat X_s^{\rm MLE}\in  \argmax_{u\in T}
    \left\{\langle Z,u-X\rangle
        -\frac{s}{2}\|u-X\|^2\right\}.
\]

For this reason, fix any $x\in T$ and $z\in \mathbb{R}^N$ and define the function
\[
    \Phi_{x,z}(s)=
    \max_{u\in T}
    \left\{
        \langle z,u-x\rangle
        -\frac{s}{2}\|u-x\|^2
    \right\}, s \geq 0.
\]In particular, if for some $s \geq 0,$ \begin{align}\label{eq:max}\hat{u}_s \in \argmax_{u\in T}
    \left\{
        \langle z,u-x\rangle
        -\frac{s}{2}\|u-x\|^2
    \right\},\end{align} then it holds $\Phi_{x,z}(s)=\langle z,\hat{u}_s-x\rangle
        -\frac{s}{2}\|\hat{u}_s-x\|^2.$

Now, we explain some analytic properties of $\Phi_{x,z}(s)$. Since we can always choose $u=x$ it holds $\Phi_{x,z}(s)\ge0$ for all $s \geq 0$. Also, because $T$ is finite, clearly $\lim_{s \rightarrow +\infty}\Phi_{x,z}(s)= 0$.  Moreover, the function $\Phi_{x,z}(s), s \geq 0$ is the maximum of finitely many linear functions, therefore it is convex and piecewise linear.  Finally, for every $s \geq 0$ that the function $\Phi_{x,z}(s)$ is differentiable, using Danskin's theorem we have for $\widehat u_s$ from \eqref{eq:max} that the derivative satisfies
\[
    (\Phi_{x,z})'(s)=-\frac12\|\widehat u_s-x\|_2^2
\]and also for every $s \geq 0$ the right derivative satisfies $|(\Phi_{x,z})'_{+}(s)| \leq \max_{u \in T} \frac12\|u-x\|_2^2<\infty$. Combining the above,
\[
    \Phi_{x,z}(0)=\Phi_{x,z}(0)-\lim_{s \rightarrow +\infty}\Phi_{x,z}(s)=\frac12\int_0^\infty \|\widehat u_s-x\|_2^2\,ds.
\]
Now set $x=X$ and $z=Z$ and take expectations on the above equality.  The left hand side becomes
\[
    \E\max_{u\in T}\langle Z,u-X\rangle
    =\E\max_{u\in T}\langle Z,u\rangle-\E\langle Z,X\rangle
    =\E\max_{u\in T}\langle Z,u\rangle=\mathcal{W}(T),
\]
because $Z$ is independent of $X$ and centered. Moreover, under this choice of $x,z$ the maximizer can be taken to satisfy for all $s \geq 0,$ $\widehat u_s=  \widehat X_s^{\rm MLE}$ almost surely. Hence, the right hand side becomes equal to $\frac12\int_0^\infty
    \E \|X-\widehat X_s^{\rm MLE}\|_2^2\,ds=\frac12\int_0^\infty
   \mathrm{MSE}_s(\widehat X_s^{\rm MLE})\,ds$, which completes the proof. 
\end{proof}

\subsubsection{An MMSE-rate distortion area theorem}

\begin{figure}[t!]
    \centering
    \begin{tikzpicture}

\begin{axis}[
    width=12cm,
    height=6.2cm,
    xmin=0, xmax=7.8,
    ymin=0, ymax=1.05,
    axis lines=left,
    xlabel={SNR $s$},
    ylabel={error},
    xtick=\empty,
    ytick=\empty,
    clip=false,
    domain=0:5.3,
    samples=200,
    xlabel style={font=\small},
    ylabel style={font=\small},
]

\addplot[name path=base, draw=none] coordinates {(0,0) (5.3,0)};

\addplot[
    name path=map,
    very thick,
    black
]
{0.92*exp(-0.42*x) + 0.05*exp(-1.8*x)};

\addplot[
    name path=mmse,
    very thick,
    black,
    dashed
]
{0.68*exp(-0.68*x) + 0.08*exp(-2.4*x)};

\addplot[
    gray!18
] fill between[of=map and base];

\addplot[
    gray!38
] fill between[of=mmse and base];

\addplot[
    very thick,
    black
]
{0.92*exp(-0.42*x) + 0.05*exp(-1.8*x)};

\addplot[
    very thick,
    black,
    dashed
]
{0.68*exp(-0.68*x) + 0.08*exp(-2.4*x)};

\node[anchor=west, font=\small] at (axis cs:5.45,0.36)
    {solid: $\mathrm{MSE}_{\mathrm{MLE}}(s)$};

\node[anchor=west, font=\small] at (axis cs:5.45,0.20)
    {dashed: $\mathrm{MMSE}_{\pi}(s)$};

\draw[-{Latex[length=2.5mm]}, thick]
    (axis cs:1.10,0.55) -- (axis cs:4.35,0.80);

\node[anchor=west, align=left, font=\small] at (axis cs:4.45,0.81)
{
$\displaystyle
\int_0^\infty
\mathrm{MSE}_{\mathrm{MLE}}(s)\,ds
= 2 \mathcal{W}(T)$
};

\draw[-{Latex[length=2.5mm]}, thick]
    (axis cs:1.65,0.18) -- (axis cs:4.35,0.54);

\node[anchor=west, align=left, font=\small] at (axis cs:4.45,0.55)
{
$\displaystyle
\int_0^\infty
\mathrm{MMSE}_{\pi}(s)\,ds \geq$ $c\int_0^{\mathrm{diam}(T)} \sqrt{R_{\pi}(r)}\,dr$
};


\end{axis}

\end{tikzpicture}
\caption{A pictorial representation of the Bayesian proof.}
\label{fig:area}
\end{figure}

Now, that we know the Gaussian width is equal to the integrated MSE of the MLE, we turn to the performance of the \emph{optimal} Bayesian estimator that minimizes the MSE, which is the posterior mean. To analyse its optimal performance, we use the celebrated I-MMSE formula for our observation Gaussian additive model. 
Specifically, consider the mutual information between the signal and the observations, given by
\[
    I_{\pi}(s)=I(X;Y_s),
\]
and the minimum mean squared error (MMSE), achieved by the posterior mean $\E[X\mid Y_s],$ given by
\[
    \mathrm{MMSE}_{\pi}(s)=\min_{A} \mathrm{MSE}_s(A)=\E\|X-\E[X\mid Y_s]\|^2.
\]
The I--MMSE identity\footnote{Notice that in this work we introduce the I-MMSE formula in a reparametrized form compared to the original version in \cite{GuoShamaiVerdu}, solely because we define our Gaussian additive model with SNR equal to $s$ while often in the literature the SNR of a Gaussian additive model is $\sqrt{s}.$ The reason we make this choice is that this reparametrization of the SNR is more convenient in the analysis of the MLE.} of Guo--Shamai--Verdu \cite{GuoShamaiVerdu} states that for all $s\ge0,$
\begin{equation}\label{eq:immse}
    I'_{\pi}(s)=s \mathrm{MMSE}_{\pi}(s).
\end{equation}

Our first observation is that the MMSE serves as an upper bound to the inverse rate distortion function, defined by
\[
    D_\pi(u)=\inf\bigl\{(\E\|V-\widehat V\|^2)^{1/2}: V \sim \pi, \hat V \sim \pi,
    I(V;\widehat V)\le u\bigr\}
\]where the infimum is again over all couplings $(V,\hat V)$.  Notice $D_\pi$ is non-increasing and $D_\pi(u)=0$ if and only if $u\ge H(V)$.
\begin{lemma}\label{lem:mmseRD}
For every $s\ge0$, it holds
\[
    2\mathrm{MMSE}_{\pi}(s)\ge D_\pi(I_{\pi}(s))^2.
\]
\end{lemma}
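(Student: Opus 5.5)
The plan is to exhibit a coupling $(V,\widehat V)$ with both marginals equal to $\pi$, with $I(V;\widehat V)\le I_\pi(s)$ and $\E\|V-\widehat V\|^2\le 2\,\mathrm{MMSE}_\pi(s)$. Once we have such a coupling, the definition of $D_\pi$ gives $D_\pi(I_\pi(s))^2\le \E\|V-\widehat V\|^2\le 2\,\mathrm{MMSE}_\pi(s)$ directly. The natural choice is posterior sampling, which is the Nishimori identity mentioned in the introduction. Concretely, let $X\sim\pi$ and $Y_s=sX+Z$ as in \eqref{eq:GAM}. Then draw $X'$ from the posterior law of $X$ given $Y_s$, conditionally independently of $X$ given $Y_s$. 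Set $V=X$ and $\widehat V=X'$.

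We first check the marginals. The pair $(X',Y_s)$ has the same joint law as $(X,Y_s)$, so $X'\sim\pi$, and the coupling is admissible in the definition of $D_\pi$.

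Next we bound the mutual information. Since $X\to Y_s\to X'$ is a Markov chain, the data processing inequality gives $I(X;X')\le I(X;Y_s)=I_\pi(s)$. Here $Y_s$ is continuous, but data processing holds for general mutual information, and $I(X;X')$ is the discrete quantity used in $R_\pi$ and $D_\pi$. Because $D_\pi$ is non-increasing, this also handles the constraint $I(V;\widehat V)\le u$ with $u=I_\pi(s)$.

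Finally we bound the distortion. Write $m=\E[X\mid Y_s]$. Conditionally on $Y_s$, the vectors $X$ and $X'$ are i.i.d. with mean $m$. Therefore
\[
\E\|X-X'\|^2=\E\|X-m\|^2+\E\|X'-m\|^2-2\,\E\langle X-m,X'-m\rangle = 2\,\mathrm{MMSE}_\pi(s),
\]
because the cross term vanishes after conditioning on $Y_s$ and the second term equals the first by Nishimori. This is where the factor $2$ comes from. Putting the three steps together gives the lemma.

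The only delicate point is the formal use of data processing with a continuous intermediate variable, together with the fact that $X'$ is a measurable posterior sample. Both are standard and follow from constructing $X'$ via a regular conditional distribution, which is explicit here because $T$ is finite.
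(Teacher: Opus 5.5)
Your proposal is correct and is essentially the paper's proof. You take $\widehat V$ to be a posterior sample of $X$ given $Y_s$, use the Nishimori identity for the marginal $\widehat V\sim\pi$, apply data processing along $X\to Y_s\to\widehat V$ to get $I(V;\widehat V)\le I_\pi(s)$, and use the conditional-i.i.d. computation (which the paper simply cites from Nishimori) to get $\E\|V-\widehat V\|^2=2\,\mathrm{MMSE}_\pi(s)$; as a minor point, monotonicity of $D_\pi$ is not needed, since your coupling is already feasible for $u=I_\pi(s)$.
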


\begin{proof}
For $V \sim \pi$, let $Y_s(V)=sV+Z, Z \sim N(0,I_N)$. The posterior mean $\E[V\mid Y_s(V)]$ achieves by definition mean squared error $\mathrm{MMSE}_{\pi}(s)$. Now, by Nishimori's identity (see e.g., \cite[Lemma 2]{niles2023all}), a sample $\hat V$ from the posterior of $V$ given $Y_s(V)$  has mean squared error $2\mathrm{MMSE}_{\pi}(s)$. Moreover, marginally both $V$ and $\hat{V}$ follow $\pi.$ Finally, by data processing,
\[
    I(V;\hat V)\le I(V;Y_s)=I_{\pi}(s).
\]
Therefore $(2\mathrm{MMSE}_{\pi}(s))^{1/2}\ge D_\pi(I_{\pi}(s))$.
\end{proof}

With this lemma at hand we move to the following important step, which relates the integrated MMSE to the integral of the inverse rate distortion function.
\begin{lemma}[MMSE area lower bound]\label{lem:mmsearea}
It holds
\[
    \int_0^\infty \mathrm{MMSE}_{\pi}(s)\,ds
    \ge  \frac1{2}\int_0^{H(V)}\frac{D_\pi(A)}{\sqrt A}\,dA.
\]
\end{lemma}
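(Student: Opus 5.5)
Combine Lemma~\ref{lem:mmseRD} with the I--MMSE identity \eqref{eq:immse} and change variables from the SNR $s$ to the information level $A=I_\pi(s)$. By Lemma~\ref{lem:mmseRD}, $\mathrm{MMSE}_\pi(s)\ge \tfrac12 D_\pi(I_\pi(s))^2$. Squares alone do not produce the weight $1/\sqrt{A}$, so we also need to control the growth of $I_\pi$ in terms of the MMSE.

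First record the needed facts about $I_\pi$. It is continuous and nondecreasing, with $I_\pi(0)=0$ and $I_\pi(s)\to H(V)$ as $s\to\infty$: $T$ is finite with distinct points, so $X$ is recovered almost surely in the limit. By \eqref{eq:immse} and monotonicity, $I_\pi(s)=\int_0^s u\,\mathrm{MMSE}_\pi(u)\,du\le s\int_0^s \mathrm{MMSE}_\pi(u)\,du$. The sharper route is to substitute directly. Write
\[
\int_0^\infty \mathrm{MMSE}_\pi(s)\,ds=\int_0^\infty \frac{I_\pi'(s)}{s}\,ds,
\]
and bound $1/s$ from below in terms of $A=I_\pi(s)$ and the MMSE. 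The natural bound comes from $\mathrm{MMSE}_\pi$ being nonincreasing in $s$, which is standard: more SNR means a degraded channel. This gives $I_\pi(s)=\int_0^s u\,\mathrm{MMSE}_\pi(u)\,du\ge \tfrac{s^2}{2}\mathrm{MMSE}_\pi(s)$, hence
\[
\frac1s\ge \sqrt{\frac{\mathrm{MMSE}_\pi(s)}{2I_\pi(s)}}\ge \frac{D_\pi(I_\pi(s))}{2\sqrt{I_\pi(s)}},
\]
where the last step uses Lemma~\ref{lem:mmseRD}. Therefore
\[
\int_0^\infty \mathrm{MMSE}_\pi(s)\,ds\ge \frac12\int_0^\infty \frac{D_\pi(I_\pi(s))}{\sqrt{I_\pi(s)}}\,I_\pi'(s)\,ds=\frac12\int_0^{H(V)}\frac{D_\pi(A)}{\sqrt A}\,dA,
\]
by the change of variables $A=I_\pi(s)$. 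This is exactly the claimed constant.

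The main technical points are twofold. The first is justifying monotonicity of $s\mapsto \mathrm{MMSE}_\pi(s)$ in this parametrization: $Y_{s}$ is a degraded version of $Y_{s'}$ for $s<s'$ after rescaling, since $Y_{s'}/s'$ plus independent Gaussian noise has the law of $Y_s/s$. The second is justifying the change of variables for a monotone absolutely continuous $I_\pi$ with a possibly non-measurable-looking integrand. $D_\pi$ is monotone, hence measurable, and the integrand is nonnegative, so the substitution formula for absolutely continuous nondecreasing maps applies, with the limit $I_\pi(\infty)=H(V)$ fixing the upper endpoint. I expect the $1/s$ lower bound via monotone MMSE to be the crux; if monotonicity were unavailable, I would fall back on the cruder Cauchy--Schwarz estimate, at the cost of constants.
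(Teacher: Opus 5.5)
Your proof is correct and gives the same constant. It differs from the paper only in how you obtain the key bound $I_\pi^{-1}(A)^2\le 4A/D_\pi(A)^2$, which in your notation reads $s^2\le 4I_\pi(s)/D_\pi(I_\pi(s))^2$.

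Written in the $s$ variable, both proofs show $I_\pi(s)=\int_0^s u\,\mathrm{MMSE}_\pi(u)\,du\ge \tfrac{s^2}{4}D_\pi(I_\pi(s))^2$, but they bound the integrand differently:
\begin{itemize}
\item The paper applies Lemma~\ref{lem:mmseRD} at every $u\le s$. This gives $\mathrm{MMSE}_\pi(u)\ge \tfrac12 D_\pi(I_\pi(u))^2\ge \tfrac12 D_\pi(I_\pi(s))^2$, using that $D_\pi$ is nonincreasing, which is immediate from its definition. The paper phrases this as the differential inequality $\bigl((I_\pi^{-1})^2\bigr)'(u)\le 4/D_\pi(A)^2$ for $u\le A$ and integrates.
\item You instead use $\mathrm{MMSE}_\pi(u)\ge \mathrm{MMSE}_\pi(s)$, i.e.\ SNR-monotonicity of the MMSE, which you correctly justify by degradation. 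You then apply Lemma~\ref{lem:mmseRD} only once, at the endpoint $s$.
\end{itemize}

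The paper's route needs no estimation-theoretic input beyond Lemma~\ref{lem:mmseRD} and the I--MMSE identity. The price is that it manipulates $I_\pi^{-1}$ and its derivative, and so relies on strict monotonicity of $I_\pi$.

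Your route never inverts $I_\pi$. It performs a single change of variables at the end, for a nondecreasing absolutely continuous map. It also separates the prior-free channel inequality $\mathrm{MMSE}_\pi(s)\le 2I_\pi(s)/s^2$ from the rate-distortion input. That decomposition may transfer more easily to other channels where an analogous monotonicity of the Bayes risk is available.
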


\begin{proof} Notice that for $0 \leq s < S:=\sup\{ u \geq 0: \mathrm{MMSE}_{\pi}(u)>0\}$ the function $I_{\pi}(s)$ is strictly increasing ranging from $0$ to $H(\pi).$
Hence, by the I-MMSE relation \eqref{eq:immse} and standard change of variables,
\begin{align}\label{eq:step_1}
    \int_0^\infty \mathrm{MMSE}_{\pi}(s)\,ds=   \int_0^S \mathrm{MMSE}_{\pi}(s)\,ds
    =\int_0^S \frac{I'_{\pi}(s)}{s}\,ds=\int_0^{H(\pi)} \frac{1}{I_{\pi}^{-1}(A)}\,dA.
\end{align}
  
By Lemma \ref{lem:mmseRD} and \eqref{eq:immse}, for all $0 \leq s <S,$
\[
    2I'_{\pi}(s)\ge s D_\pi(I_{\pi}(s))^2.
\]
Equivalently, for all $0 \leq u < H(\pi)$,
\[
   (I_{\pi}^{-1})'(u)I_{\pi}^{-1}(u) D_{\pi}(u)^2\le 2.
\]Since $D_\pi$ is nonincreasing, for any $H(V)>A \geq 0$, $D_\pi(u)\ge D_\pi(A)>0$, hence for all $u \leq A$
\[
  ((I_{\pi}^{-1}(u))^2)'\le \frac{4}{D_\pi(A)^2}.
\] By integrating $u$ from $0$ to $A$, the above displayed equation gives for any $H(V)>A> 0$,
\[
  (I_{\pi}^{-1}(A))^2\le \frac{4A}{D_\pi(A)^2}.
\]
or
\[
    \frac1{I_{\pi}^{-1}(A)}\ge \frac{D_\pi(A)}{2\sqrt{A}}.
\]
It follows then from \eqref{eq:step_1}
\[
    \int_0^\infty \mathrm{MMSE}_{\pi}(s)\,ds
    \ge \frac1{2}\int_0^{H(V)}\frac{D_\pi(A)}{\sqrt A}\,dA.
\]
\end{proof}

We now need a final lemma that allows to relate the integral of the inverse rate density function to the integral of the rate density function itself. Satisfyingly, via simple double counting, the exact desired integral appears.
\begin{lemma}\label{lem:layer_cake}
It holds
\[
  \int_0^{H(V)} \frac{D_{\pi}(A)}{\sqrt A}\,dA=2\int_0^{\mathrm{diam}(T)} \sqrt{R_{\pi}(r)}\,dr.
\]
\end{lemma}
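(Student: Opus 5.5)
We will prove the identity by writing both sides as the measure of the same planar region and applying Fubini (Tonelli), i.e.\ a layer-cake argument. The key observation is that $D_\pi$ and $R_\pi$ are generalized inverses of one another. Both are non-increasing. $D_\pi$ takes values in $[0,\mathrm{diam}(T)]$, since any coupling of two $\pi$-distributed variables has $\|V-\widehat V\|\le \mathrm{diam}(T)$. $R_\pi$ takes values in $[0,H(V)]$: the identity coupling gives $I=H(V)$ at distortion $0$, and the independent coupling gives $I=0$ with $(\E\|V-\widehat V\|^2)^{1/2}\le\mathrm{diam}(T)$.

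First we establish the duality relation: for $r\in(0,\mathrm{diam}(T))$ and $A\in(0,H(V))$,
\[
D_\pi(A)> r \iff R_\pi(r)> A,
\]
up to a Lebesgue-null set of boundary points. Both directions are immediate from the definitions as infima over the same family of self-couplings. If $R_\pi(r)\le A$ is attained, or approached, by a coupling with distortion at most $r$ and information at most $A+\epsilon$, then $D_\pi(A+\epsilon)\le r$; the converse is symmetric.

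To avoid worrying about attainment, we will note that the set of self-couplings of $\pi$ on the finite set $T$ is compact. Both $I$ and $\E\|V-\widehat V\|^2$ are continuous on this set, so the infima are attained. With attainment, $R_\pi(r)\le A \iff D_\pi(A)\le r$ holds exactly, which is the cleanest form of the relation. A failure only on measure-zero sets would in any case not affect the integrals.

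With the relation in hand, we write $D_\pi(A)=\int_0^{\mathrm{diam}(T)}\mathbbm{1}\{r<D_\pi(A)\}\,dr$. Substituting and applying Tonelli gives
\[
\int_0^{H(V)}\frac{D_\pi(A)}{\sqrt A}\,dA=\int_0^{\mathrm{diam}(T)}\int_0^{H(V)}\frac{\mathbbm{1}\{A<R_\pi(r)\}}{\sqrt A}\,dA\,dr .
\]
The inner integral equals $\int_0^{R_\pi(r)}A^{-1/2}\,dA=2\sqrt{R_\pi(r)}$, using $R_\pi(r)\le H(V)$. This yields the claim.

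The main obstacle is justifying the equivalence of the sublevel sets with strict versus non-strict inequalities at the endpoints. We will handle it through the attainment or compactness remark above, together with monotonicity. A monotone function has at most countably many discontinuities, so any discrepancy on the boundary is confined to a null set and does not affect the integrals.
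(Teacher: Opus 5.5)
Your proof is correct and is essentially the paper's argument: both write $D_\pi$ as $\int_0^{\mathrm{diam}(T)}\mathbbm{1}\{D_\pi>r\}\,dr$, swap the integrals, and use the generalized-inverse relation $D_\pi(A)>r \iff R_\pi(r)>A$. The only difference is that the paper first substitutes $A=u^2$ and integrates $\mathbbm{1}\{R_\pi(r)>u^2\}$ in $u$, while you integrate $A^{-1/2}\mathbbm{1}\{A<R_\pi(r)\}$ directly; your compactness argument (the infima are attained on the compact set of self-couplings, where $I$ and the distortion are continuous) makes that relation exact, a step the paper uses without comment.
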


\begin{proof}
By standard change of variables and exchanging the order of integration,

\begin{align*}
\int_0^{H(V)} \frac{D_{\pi}(A)}{\sqrt A}\,dA&=2\int_0^{\sqrt{H(V)}} D_{\pi}(u^2)\,du\\
&=2\int_0^{\sqrt{H(V)}}\int_0^{\mathrm{diam}(T)} 1(D_{\pi}(u^2)>r) \,dr\,du\\
&=2\int_0^{\mathrm{diam}(T)}\int_0^{\sqrt{H(V)}}1(R_{\pi}(r)>u^2) \,du\,dr\\
&=2\int_0^{\mathrm{diam}(T)} \sqrt{R_{\pi}(r)}\,dr.
\end{align*}
\end{proof}

\subsection{Putting it all together}

By Proposition \ref{prop:maparea} and the definition of the MMSE as the \emph{minimum} mean squared error among all estimators,
\[
   \mathcal{W}(T)=\E\sup_{t\in T}\langle Z,t\rangle=\frac12\int_0^\infty
   \mathrm{MSE}_s(\widehat X_s^{\rm MLE})\,ds \ge \frac{1}{2}\int_0^\infty \mathrm{MMSE}_{\pi}(s)\,ds.
\]

Applying then Lemma \ref{lem:mmsearea} and Lemma \ref{lem:layer_cake} gives \eqref{eq:goal}.

\section{ Bayesian Intuition and Concluding Thoughts}

In this final section, we highlight a few conceptual consequences of the Bayesian proof in terms of understanding Theorem~\ref{thm:tal}.

As discussed above, the upper-bound direction of Theorem~\ref{thm:tal} is often viewed as the intuitive part of the theorem, see, for example, the generic chaining formulation in \cite{TalagrandBook}.  The lower-bound direction is much less transparent.  One benefit of the present proof is that it gives this direction a clean statistical interpretation.

Let \(T\) be finite, and consider the Gaussian additive model \eqref{eq:GAM}.  Define the integrated Bayes-risk functional
\begin{align}\label{eq:mmse}
    \mathcal{Z}(T,d):=\sup_{\pi \in\mathcal{P}(T)}
    \int_0^\infty \mathrm{MMSE}_{\pi}(s)\,ds .
\end{align}
By Lemma~\ref{lem:mmsearea}, inequality~\eqref{eq:liu}, and an easy MMSE-diameter relation described in Lemma~\ref{lem:mmse-area-diameter}, we obtain for a universal constant \(c>0\) the relation
\[
    c\mathcal{M}(T,d)\le \mathcal{Z}(T,d),
\]
where \(\mathcal{M}(T,d)\) denotes the Fernique--Talagrand functional.  In words, the supremum over $\pi$ of the integrated MMSE functional dominates, up to constants, the classical majorizing-measure functional. The hard direction of Theorem~\ref{thm:tal} then follows from a simple statistical observation: the Bayes estimator is optimal for squared error.  Indeed, for every prior \(\pi\) and every \(s\),
\[
    \mathrm{MMSE}_{\pi}(s)
    \le
    \mathrm{MSE}_s(\widehat X_s^{\rm MLE}).
\]
Moreover, Proposition~\ref{prop:maparea} shows that the area under the MLE error curve is exactly the Gaussian width of \(T\).  In short, the lower bound of the MMT follows from simply comparing the maximum-likelihood estimator with the Bayes-optimal estimator in a Gaussian additive model.

This also gives a canonical interpretation of the optimizing measure.  The classical majorizing measure in \(\mathcal{M}(T,d)\) is an object that is often considered hard to understand probabilistically, see e.g., the discussion in \cite[Section 2]{van2025subgaussian} and how this difficulty has affected the literature of the problem. By contrast, the measure \(\pi\) appearing in the ``dual" \eqref{eq:mmse} is a very canonical statistical object; it is a least favorable prior for the Gaussian additive model, in the sense that it maximizes the integrated Bayes risk; see, for example, \cite[Section~5.3.2]{berger2013statistical} for background on least favorable priors.  Thus, while majorizing measures themselves can be difficult to interpret probabilistically, the dual optimal measure $\pi$ has a clean statistical meaning.


It is perhaps striking that the Bayesian framework fits so naturally into this classical problem. A closely related Bayesian viewpoint was recently used by Mossel, Niles-Weed, Sun, and the author \cite{mossel2025bayesian} to give a new proof of a seemingly quite different result: the fractional Kahn--Kalai conjecture \cite{FKNP} in probabilistic combinatorics, posed by Talagrand \cite{talagrand2010many} as a refinement of earlier conjectures by Kahn and Kalai \cite{kahn2007thresholds}. That conjecture gives a formula for thresholds of monotone properties of random subsets, whereas Theorem~\ref{thm:tal} gives a formula for the supremum of a Gaussian process. The success of a similar Bayesian proof strategy in both settings suggests a broader, though speculative, question: whether the majorizing-measure theorem and the theory of expectation thresholds are manifestations of a common mathematical theory.

Another natural direction for future work is to understand how far the Bayesian proof allows one to generalize the result beyond the Gaussian measure. This question is closely related to several central problems in modern probability. For instance, replacing the Gaussian vector in the definition of Gaussian width by a vector with i.i.d. symmetric Bernoulli entries leads to the Bernoulli conjecture, stated for example in Talagrand’s monograph \cite{TalagrandBook} and proved in the breakthrough work of Bednorz and Latała \cite{bednorz2014boundedness}. More generally, understanding suprema indexed by dependent log-concave random vectors remains a major open direction in the field. A recurring obstruction in extending Gaussian arguments to non-Gaussian processes is the lack of a replacement for Sudakov minoration. This is one reason that the Bayesian approach can provide a promising alternative to approach these questions: the proof above does not invoke Sudakov minoration, but proceeds through an area identity for the maximum-likelihood estimator and the I-MMSE formula\footnote{In fact, we point out to the interested reader that the proof presented in this paper can be easily modified to yield a Bayesian proof of Sudakov minoration by replacing Lemma \ref{lem:layer_cake} with first choosing $\pi$ to be the uniform measure over an arbitrary $\epsilon$-packing and then simply applying Fano's inequality to directly lower bound $D_{\pi}(A)$ for $A$ up to constant of the logarithm of the packing's cardinality. In fact, interestingly, the duality step by Liu \eqref{eq:liu} turns out not to be necessary to prove Sudakov minoration via the Bayesian approach.}. While the I-MMSE formula is classical for Gaussian channels, analogous identities are known for Poisson channels \cite{atar2012mutual} and, in various forms, for exponential-family \cite{raginsky2009mutual}, typically with squared error replaced by a channel-specific loss. It is therefore natural to ask whether the present Bayesian mechanism can be adapted to give new insights into non-Gaussian majorizing-measure-type problems. Interestingly, this seems even more promising as only a few days after the first version of this paper was posted online, Pathak and Zhivotovskiy \cite{pathak2026remark} showed that the present Bayesian/I-MMSE proof strategy can be adapted to yield the $M(T,d)$ lower bound (up to constants) for all centered probability measures whose translates satisfy a quadratic Kullback--Leibler stability condition.

\section*{Acknowledgments}
The author is thankful to Michel Talagrand, Ramon van Handel, Jingbo Liu, Reese Pathak, Nikita Zhivotovskiy, Alkis Kalavasis, Jonathan Niles-Weed and Manolis Zampetakis for helpful feedback and comments.

\newpage

\bibliographystyle{alpha}
\bibliography{biblio}

\newpage
\appendix

\section{From finite to separable Gaussian processes}
\label{sec:finite-to-separable}

In the Bayesian proof in the main body we assumed $T$ is finite. While it appears folklore in the literature that the finite-$T$ statement of the hard direction of Talagrand's Theorem \ref{thm:tal} extends to any separable Gaussian process, we include here, for completeness, a full compactness proof establishing the reduction.

Let \((G_t)_{t\in T}\) be any centered separable Gaussian process with canonical metric
\[
d(s,t)=\bigl(\mathbb E(G_s-G_t)^2\bigr)^{1/2}.
\] Now assume for any finite \(F\), we have

\[
\mathbb E\max_{t\in F}G_t
\ge
c_0 \mathfrak R (F)
\]
where \(c_0>0\) is a universal constant and \[
\mathfrak R (F)
:=
\sup_{\pi\in\mathcal P(F)}
\int_0^{\operatorname{diam}(F)}
\sqrt{R_{\pi,F}(r)}\,dr.
\] By monotone convergence we directly get
\[\mathbb E\max_{t\in T}G_t
\ge 
c_0 \mathfrak R_{\rm fin}(T)\]for
\[
\mathfrak R_{\rm fin}(T)
:=
\sup_{\substack{F\subseteq T\\ F\ \mathrm{finite}}}
\;
\sup_{\pi\in\mathcal P(F)}
\int_0^{\operatorname{diam}(F)}
\sqrt{R_{\pi,F}(r)}\,dr.
\]
To continue, we turn to the partition version of \(\gamma_2\) Talagrand's functional, given by
\[
\gamma_2^{\rm part}(T,d)
=
\inf_{\{\mathcal A_n\}}
\sup_{t\in T}
\sum_{n\ge0}
2^{n/2}\operatorname{diam}(\mathcal A_n(t)),
\]where for each $n$ \(\mathcal A_n\) ranges over partitions of \(T\) with number of cells satisfying
\[
|\mathcal A_n|\le N_n=2^{2^n},
\]
and \(\mathcal A_n(t)\) denotes the unique cell of \(\mathcal A_n\) containing \(t\).
It is known in the literature that $\mathfrak R (T)$ and $ \gamma_2^{\rm part}(T,d)$ (as well $\mathcal{M}(T,d)$ from \eqref{eq:maj}) are equal up to universal constants for any metric space $(T,d)$ \cite{Talagrand2005GenericChaining}.  Hence it suffices to prove the following lemma, which follows from an elementary compactness argument. We include the full proof here below.

\begin{lemma}
\label{lem:finite-subset-gamma2}
For every metric space \((T,d)\),
\[
\gamma_2^{\rm part}(T,d)
=
\sup_{\substack{F\subseteq T\\ F\ \mathrm{finite}}}
\gamma_2^{\rm part}(F,d).
\]
\end{lemma}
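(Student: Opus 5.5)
The inequality $\gamma_2^{\rm part}(F,d)\le \gamma_2^{\rm part}(T,d)$ for finite $F\subseteq T$ is immediate. Any admissible sequence $\{\mathcal A_n\}$ of partitions of $T$ restricts to an admissible sequence $\{\mathcal A_n\cap F\}$ of partitions of $F$: the number of nonempty cells does not increase, and $\operatorname{diam}(\mathcal A_n(t)\cap F)\le \operatorname{diam}(\mathcal A_n(t))$. Taking the supremum over $t\in F\subseteq T$ and then the infimum over partitions gives "$\ge$" in the lemma. So the content is the reverse inequality. Set $S:=\sup_F \gamma_2^{\rm part}(F,d)$, assume $S<\infty$, and fix $\epsilon>0$. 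I plan a compactness (Tychonoff) argument that avoids any separability assumption.

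\textbf{Encoding.} Encode an admissible sequence of partitions of $T$ as a labeling $f=(f_n)_{n\ge0}$ with $f_n:T\to\{1,\dots,N_n\}$; cell $j$ of level $n$ is $f_n^{-1}(j)$. The space $\Omega=\prod_{n\ge0}\prod_{t\in T}\{1,\dots,N_n\}$ is compact in the product topology, since each factor is finite. The partitions are not required to be nested, so every point of $\Omega$ is an admissible sequence and no nesting constraint needs to be preserved in the limit.

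\textbf{Net and cluster point.} For each finite $F\subseteq T$, choose partitions of $F$ with $\sup_{t\in F}\sum_n 2^{n/2}\operatorname{diam}(\mathcal A_n(t))\le S+\epsilon$. Encode them as labels on $F$ and extend the labels arbitrarily to $T$, giving $f^F\in\Omega$. Finite subsets ordered by inclusion form a directed set, so $(f^F)_F$ is a net in the compact space $\Omega$, and it has a cluster point $f$. Let $\mathcal A_n$ be the partitions of $T$ induced by $f_n$.

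\textbf{Cost bound for $f$.} Fix $t\in T$, a level $M$, and finitely many points $\{u_{n,i},v_{n,i}\}_{n\le M,\,i\le k}$ with $u_{n,i},v_{n,i}\in\mathcal A_n(t)$. Let $E$ be the finite set consisting of $t$ and all these points. The set of $g\in\Omega$ agreeing with $f$ on levels $\le M$ at the points of $E$ is a basic open neighbourhood of $f$. By the cluster-point property there is $F\supseteq E$ with $f^F$ in this neighbourhood. Then, in the partition of $F$ encoded by $f^F$, each pair $u_{n,i},v_{n,i}$ lies in the same level-$n$ cell as $t$. Hence
\[
\sum_{n\le M}2^{n/2}\max_{i\le k} d(u_{n,i},v_{n,i})\le S+\epsilon .
\]
Taking the supremum over the finitely many chosen pairs, then over $k$, turns each $\max_i$ into $\operatorname{diam}(\mathcal A_n(t))$. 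Letting $M\to\infty$ gives $\sum_n 2^{n/2}\operatorname{diam}(\mathcal A_n(t))\le S+\epsilon$ for every $t\in T$. Therefore $\gamma_2^{\rm part}(T,d)\le S+\epsilon$, and since $\epsilon>0$ is arbitrary, $\gamma_2^{\rm part}(T,d)\le S$.

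The main subtlety is the last step: a single finite $F$ can only certify finitely many pairs at finitely many levels. This is why the argument uses a cluster point of a net rather than a subsequence, and why diameters are recovered as suprema over finite pair sets. With that in place, the argument is routine.
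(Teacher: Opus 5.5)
Your proof is correct and follows the same route as the paper: both encode partitions as labelings in the compact space $\prod_{n\ge0}[N_n]^T$, extract a limiting labeling from near-optimal partitions of finite subsets, and recover each $\operatorname{diam}(\mathcal A_n(t))$ from finitely many approximating pairs. The only difference is that you phrase compactness via a cluster point of the net $(f^F)_F$, while the paper checks the finite intersection property of the closed sets $C(E,t,M)$ using a near-optimal partition of $E_1\cup\cdots\cup E_k$; these are equivalent formulations of the same argument.
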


\begin{proof}
The inequality
\[
\sup_{F\subseteq T,\ |F|<\infty}\gamma_2^{\rm part}(F,d)
\le
\gamma_2^{\rm part}(T,d)
\]
is immediate. Indeed, if \((\mathcal A_n)\) is an admissible sequence of partitions of \(T\), then its restriction to \(F\),
\[
\mathcal A_n|_F
:=
\{A\cap F:\ A\in\mathcal A_n,\ A\cap F\neq \varnothing\},
\]
is an admissible sequence of partitions of \(F\), and for every \(t\in F\),
\[
\operatorname{diam}\bigl((\mathcal A_n|_F)(t)\bigr)
\le
\operatorname{diam}\bigl(\mathcal A_n(t)\bigr).
\]
Taking the infimum over admissible partition sequences on \(T\) gives the claim.

We now prove the reverse inequality. Let
\[
L:=
\sup_{\substack{F\subseteq T\\ F\ \mathrm{finite}}}
\gamma_2^{\rm part}(F,d).
\]
If \(L=+\infty\), there is nothing to prove so we assume \(L<\infty\). Now, fix \(\varepsilon>0\). We shall construct an admissible partition sequence \((\mathcal A_n)\) of \(T\) such that
\[
\sup_{t\in T}
\sum_{n\ge0}
2^{n/2}\,\operatorname{diam}\bigl(\mathcal A_n(t)\bigr)
\le L+\varepsilon.
\]
This will imply
\[
\gamma_2^{\rm part}(T,d)\le L+\varepsilon,
\]
and then the result follows by letting \(\varepsilon\) go to zero.

For each \(n\), let \([N_n]:=\{1,\dots,N_n\}\). Consider the compact product space
\[
\Omega
:=
\prod_{n\ge0}[N_n]^T
\]
with the product topology, where each \([N_n]\) has the discrete topology. A point
\[
\omega=(\ell_n)_{n\ge0}\in\Omega
\]
assigns to each \(n\) a label map
\[
\ell_n:T\to [N_n].
\]
The inverse images of \(\ell_n\) define a partition of \(T\) with at most \(N_n\) cells.

For a finite set \(E\subseteq T\), a point \(t\in E\), an integer \(M\ge0\), and a labeling \(\omega=(\ell_n)\), define
\[
\Delta_n^E(t;\omega)
:=
\operatorname{diam}\{x\in E:\ell_n(x)=\ell_n(t)\}.
\]
Equivalently,
\[
\Delta_n^E(t;\omega)
=
\max\{d(x,y):x,y\in E,\ \ell_n(x)=\ell_n(y)=\ell_n(t)\}.
\]
Since \(E\) is finite, this maximum is over a finite nonempty set.

Now define the closed subset
\[
C(E,t,M)
:=
\left\{
\omega\in\Omega:
\sum_{n=0}^M 2^{n/2}\,\Delta_n^E(t;\omega)
\le L+\varepsilon
\right\}.
\]
The set \(C(E,t,M)\) is closed because it depends only on finitely many labels \(\ell_0,\dots,\ell_M\) restricted to the finite set \(E\).

We claim that the family of closed sets
\[
\{C(E,t,M): E\subseteq T\text{ finite},\ t\in E,\ M\ge0\}
\]
has the finite intersection property.

Indeed, take finitely many constraints
\[
C(E_1,t_1,M_1),\dots,C(E_k,t_k,M_k).
\]
Let
\[
E_\ast:=E_1\cup\cdots\cup E_k.
\]
By the definition of \(L\), there is an admissible partition sequence
\[
(\mathcal B_n)_{n\ge0}
\]
of the finite metric space \(E_\ast\) such that
\[
\sup_{u\in E_\ast}
\sum_{n\ge0}
2^{n/2}\,\operatorname{diam}\bigl(\mathcal B_n(u)\bigr)
\le L+\varepsilon.
\]
Label the cells of \(\mathcal B_n\) by elements of \([N_n]\). This gives maps
\[
\ell_n:E_\ast\to[N_n].
\]
Extend each \(\ell_n\) arbitrarily to all of \(T\), for instance by assigning all points of \(T\setminus E_\ast\) to label \(1\). 

For every \(j=1,\dots,k\), every \(t_j\in E_j\), and every \(n\le M_j\),
\[
\Delta_n^{E_j}(t_j;\omega)
\le
\operatorname{diam}\bigl(\mathcal B_n(t_j)\bigr),
\]
because \(E_j\subseteq E_\ast\). Hence
\[
\sum_{n=0}^{M_j}
2^{n/2}\,\Delta_n^{E_j}(t_j;\omega)
\le
\sum_{n\ge0}
2^{n/2}\,\operatorname{diam}\bigl(\mathcal B_n(t_j)\bigr)
\le L+\varepsilon.
\]
Thus this labeling belongs to all of the finitely many closed sets. The finite intersection property is proved.

Since \(\Omega\) is compact, we conclude the intersection of all the sets \(C(E,t,M)\) is nonempty. Choose
\[
\omega=(\ell_n)_{n\ge0}
\]
in this intersection. Let \(\mathcal A_n\) be the partition of \(T\) into the fibers of \(\ell_n\). Then
\[
|\mathcal A_n|\le N_n.
\]

Now, fix \(t\in T\) and \(M\ge0\). We claim
\[
\sum_{n=0}^M
2^{n/2}\,\operatorname{diam}\bigl(\mathcal A_n(t)\bigr)
\le L+\varepsilon.
\]
For each \(0\le n\le M\), choose points \(x_n,y_n\in \mathcal A_n(t)\) such that
\[
d(x_n,y_n)
\ge
\operatorname{diam}\bigl(\mathcal A_n(t)\bigr)-\delta_n,
\]
where \(\delta_n>0\) will be chosen later. If the diameter is not attained, choose \(x_n,y_n\) approximating the supremum; if the diameter is infinite, the argument below gives an immediate contradiction with $L<\infty$ by choosing pairs with arbitrarily large distance.

Let
\[
E:=\{t\}\cup\{x_n,y_n:0\le n\le M\}.
\]
Since \(\omega\in C(E,t,M)\), we have
\[
\sum_{n=0}^M
2^{n/2}\,\Delta_n^E(t;\omega)
\le L+\varepsilon.
\]
But \(x_n,y_n\in E\) and
\[
\ell_n(x_n)=\ell_n(y_n)=\ell_n(t),
\]
so
\[
\Delta_n^E(t;\omega)\ge d(x_n,y_n)
\ge
\operatorname{diam}\bigl(\mathcal A_n(t)\bigr)-\delta_n.
\]
Therefore
\[
\sum_{n=0}^M
2^{n/2}\,\operatorname{diam}\bigl(\mathcal A_n(t)\bigr)
\le
L+\varepsilon+\sum_{n=0}^M a_n\delta_n.
\]
Letting all \(\delta_n\downarrow0\), we obtain
\[
\sum_{n=0}^M
2^{n/2}\,\operatorname{diam}\bigl(\mathcal A_n(t)\bigr)
\le L+\varepsilon.
\]
Since this holds for every \(M\), monotone convergence of the partial sums yields
\[
\sum_{n\ge0}
2^{n/2}\,\operatorname{diam}\bigl(\mathcal A_n(t)\bigr)
\le L+\varepsilon.
\]
Finally take the supremum over \(t\in T\). Thus
\[
\sup_{t\in T}
\sum_{n\ge0}
2^{n/2}\,\operatorname{diam}\bigl(\mathcal A_n(t)\bigr)
\le L+\varepsilon.
\]
Hence
\[
\gamma_2^{\rm part}(T,d)\le L+\varepsilon.
\]
Letting \(\varepsilon\downarrow0\) gives
\[
\gamma_2^{\rm part}(T,d)\le L.
\]
Together with the first inequality, this proves
\[
\gamma_2^{\rm part}(T,d)
=
\sup_{\substack{F\subseteq T\\ F\ \mathrm{finite}}}
\gamma_2^{\rm part}(F,d).
\]
\end{proof}

\section{The rate-distortion integral upper bounds the \(\mathcal{M}(T,d)\) functional}
\label{sec:integral}

In this section, we include for completeness the following result and a (very) slightly
modified proof from \cite[Section 6]{Liu2025}. The proof is based on elementary
calculus and Sion's minimax theorem.

\begin{theorem}[\cite{Liu2025}]
\label{thm:LiuBridge}
There exist universal constants \(c,C>0\) such that, for every finite metric
space \((T,d)\),
\[
  \sup_{\pi \in \mathcal P(T)}
  \int_0^{\operatorname{diam}(T)} \sqrt{R_\pi(r)}\,dr
  \ge
  c \mathcal{M}(T,d)-C\operatorname{diam}(T).
\]
\end{theorem}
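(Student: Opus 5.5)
The plan is to pass from the $\inf_\mu\sup_t$ form of $\mathcal M(T,d)$ to a $\sup_\pi\inf_\mu$ form by a minimax theorem, and then bound the inner infimum at each scale by the rate-distortion function. Convexifying the integrand and the final Markov/change-of-measure step are where the constants and the $\operatorname{diam}(T)$ slack enter.

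\emph{Step 1: convexify.} The integrand $p\mapsto\sqrt{\log(1/p)}$ is not convex on all of $(0,1]$. Setting $g=\log(1/p)$, a direct computation gives
\[
\frac{d^2}{dp^2}\sqrt{g}=p^{-2}\Bigl(\frac{1}{2\sqrt g}-\frac{1}{4g^{3/2}}\Bigr),
\]
which is nonnegative only when $g\ge 1/2$. Replacing $g$ by $1+\log(1/p)=\log(e/p)$ puts us in this regime for every $p\in(0,1]$. So $p\mapsto\sqrt{\log(e/p)}$ is convex and nonincreasing on $(0,1]$.

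Since $\mu\mapsto\mu(B(t,r))$ is linear, the functional
\[
\Psi(\mu,\pi)=\int_0^{\operatorname{diam}(T)}\E_{V\sim\pi}\sqrt{\log\frac{e}{\mu(B(V,r))}}\,dr
\]
is convex and lower semicontinuous in $\mu$, with value $+\infty$ allowed. It is also linear in $\pi$. Because $\sqrt{\log(e/p)}\le\sqrt{\log(1/p)}+1$, we have
\[
\inf_\mu\sup_{t}\Psi(\mu,\delta_t)\ \ge\ \mathcal M(T,d),
\qquad
\inf_\mu\sup_{t}\Psi(\mu,\delta_t)\ \le\ \mathcal M(T,d)+\operatorname{diam}(T).
\]
The lower bound is the one we need, and it is immediate.

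\emph{Step 2: minimax.} The supremum over $t$ equals the supremum over $\pi\in\mathcal P(T)$, since $\Psi$ is linear in $\pi$. Both $\mathcal P(T)$ and the set of measures on $T$ are compact simplices, as $T$ is finite. Sion's minimax theorem then gives
\[
\mathcal M(T,d)\ \le\ \sup_\pi\inf_\mu\Psi(\mu,\pi).
\]
The possible infinite values can be handled by restricting to measures $\mu$ with $\mu\ge\eta$ pointwise and letting $\eta\to0$.

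\emph{Step 3: bound the inner infimum by the rate-distortion function.} For a fixed $\pi$ I would simply take $\mu=\pi$, which is one measure serving every scale at once. Jensen's inequality gives
\[
\Psi(\pi,\pi)\le\int_0^{\operatorname{diam}(T)}\sqrt{1+\E\log\frac{1}{\pi(B(V,r))}}\,dr .
\]
Now fix $r$ and take any coupling of $(V,\widehat V)$ with both marginals equal to $\pi$ and $\E\|V-\widehat V\|^2\le (r/2)^2$. Markov's inequality gives
\[
\E\,P(\widehat V\in B(V,r)\mid V)\ \ge\ 3/4 .
\]
Apply the data-processing bound $D(P\|Q)\ge P(B)\log\frac{P(B)}{Q(B)}-\frac1e$ with $P=P_{\widehat V\mid V}$, $Q=\pi$ (the law of $\widehat V$) and $B=B(V,r)$. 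Averaging over $V$, and handling the convexity in $P(B)$ with one further averaging step, should yield
\[
\E\log\frac{1}{\pi(B(V,r))}\ \le\ C_1\bigl(I(V;\widehat V)+1\bigr).
\]
Taking the infimum over couplings gives $\E\log\frac1{\pi(B(V,r))}\le C_1(R_\pi(r/2)+1)$. Then $\sqrt{a+b}\le\sqrt a+\sqrt b$ and the substitution $r\mapsto r/2$ give
\[
\Psi(\pi,\pi)\ \le\ 2\sqrt{C_1}\int_0^{\operatorname{diam}(T)}\sqrt{R_\pi(r)}\,dr\;+\;C_2\operatorname{diam}(T),
\]
using that $R_\pi(r)$ is nonincreasing in $r$.

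Combining Steps 2 and 3 gives the theorem with $c=1/(2\sqrt{C_1})$ and $C=C_2c$.

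The main obstacle is Step 3. The Markov/change-of-measure step must be done carefully, because $\pi(B(V,r))$ sits inside a logarithm averaged over $V$. If the direct argument loses too much, the fallback is to restrict to the event $\{P(\widehat V\in B(V,r)\mid V)\ge1/2\}$, which has probability bounded below. On that event the standard bound $\log\frac{1}{Q(B)}\le \frac{D(P\|Q)+\log 2}{P(B)}$ applies. On the complement, I would bound $\log\frac1{\pi(B(V,r))}$ by using $B(V,r)\ni V$, so that $\pi(B(V,r))\ge\pi(V)$, and control $\E\log\frac1{\pi(V)}=H(V)$ against $R_\pi$ only at small scales, absorbing everything else into the $\operatorname{diam}(T)$ slack.
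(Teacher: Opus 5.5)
Your Steps 1--2 are fine. The map $p\mapsto\sqrt{\log(e/p)}$ is convex on $(0,1]$, and Sion gives $\mathcal M(T,d)\le\sup_\pi\inf_\mu\Psi(\mu,\pi)\le\sup_\pi\Psi(\pi,\pi)$. This is a Fernique-type convexity argument on ball masses, whereas the paper applies Sion later, to its functional $\Psi_{\alpha,\mu}(x)$.

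\textbf{The gap is in Step 3.} The per-scale inequality $\E\log\frac{1}{\pi(B(V,r))}\le C_1\bigl(I(V;\widehat V)+1\bigr)$, for couplings with $\E\|V-\widehat V\|^2\le(r/2)^2$, is false. So is any comparison of $\E\sqrt{\log(1/\pi(B(V,r)))}$ with $R_\pi$ at a fixed proportional scale. Here is a counterexample:
\begin{itemize}
\item Let $T$ be $M+1$ points with all pairwise distances equal to $\Delta$. Let $\pi$ give mass $1-\epsilon$ to one point $a$ and $\epsilon/M$ to each other point.
\item Couple $(V,\widehat V)=(a,a)$ with probability $1-\epsilon$; otherwise take $V,\widehat V$ independent and uniform on $T\setminus\{a\}$.
\item Both marginals are $\pi$, and $\E\|V-\widehat V\|^2\le\epsilon\Delta^2\le(\Delta/4)^2$ when $\epsilon\le 1/16$.
\item $I(V;\widehat V)=h(\epsilon)\le\log 2$, where $h$ is the binary entropy.
\item At $r=\Delta/2$ every ball $B(v,r)$ is $\{v\}$. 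Hence $\E\sqrt{\log(1/\pi(B(V,r)))}\ge\epsilon\sqrt{\log(M/\epsilon)}\to\infty$ as $M\to\infty$, while $R_\pi(r/2)\le\log 2$.
\end{itemize}

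The reason is that the distortion constraint holds only on average over $V$. A coupling can spend its budget moving a low-probability set of isolated points far away. There $\PP(\widehat V\in B(V,r)\mid V)=0$ and $\log(1/\pi(B(V,r)))$ is huge, so your change-of-measure bound never sees them.

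Your fallback does not close this. On the bad event you still need the contributions, integrated over $r$, to be paid for by $R_\pi$ at \emph{smaller} scales. The proposal gives no mechanism for deciding which scales pay for which points. Bounding them by $H(V)=R_\pi(0)$ at every $r$ costs up to $\Delta\sqrt{H(V)}$, which is not $O\bigl(\int_0^\Delta\sqrt{R_\pi(r)}\,dr+\Delta\bigr)$ in general.

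\textbf{What you actually need, and how to get it.} You only need the integrated bound $\E_{V\sim\pi}\int_0^\Delta\sqrt{\log(1/\pi(B(V,r)))}\,dr\le C\int_0^\Delta\sqrt{R_\pi(r)}\,dr+C\Delta$. This is true: in the example, the expensive points are paid for at scales $r\le c\sqrt{\epsilon}\,\Delta$, where $R_\pi(r)$ is of order $\epsilon\log M$. Proving it requires trading between scales. This is exactly what the paper's argument does, and it applies verbatim with $\mu=\pi$:
\begin{itemize}
\item Replace the constraint by the penalty $\alpha^{-2}\E d(V,\widehat V)^2+I(V;\widehat V)$ and drop the output-marginal constraint.
\item The objective then decouples into $\E_{X\sim\pi}\Psi_{\alpha,\pi}(X)$. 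The Markov radius $\rho=\min\{a\sqrt{m_2},\Delta\}$ is therefore chosen separately for each $x$.
\item Markov plus the binary-KL bound give $\Psi_{\alpha,\pi}(x)\ge c_1\inf_r\{\alpha^{-2}r^2+(L_x(r)-L_0)_+\}$ \emph{pointwise in $x$}.
\item Liu's calculus lemma \eqref{eq:penalized-correct}, applied to $\sqrt{R_\pi}$ and to each $\sqrt{(L_x-L_0)_+}$, converts $\int_0^\infty\cdots\,d\alpha$ back into $\int_0^\Delta\sqrt{\cdot}\,dr$.
\end{itemize}
With this chain in place of your Step 3, your Steps 1--2 yield a complete proof.
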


\begin{proof}
Write
\[
    \Delta:=\operatorname{diam}(T).
\]
We employ the following elementary calculus lemma from \cite[Lemma 7]{Liu2025}.
If \(y:[0,\Delta]\to[0,\infty]\) is non-increasing, right-continuous, and
\(y(\Delta)=0\), then there exist universal constants \(0<c_0<C_0\)
such that
\begin{equation}
\label{eq:penalized-correct}
    c_0\int_0^\Delta y(r)\,dr
    \le
    \int_0^\infty
      \inf_{0\le r\le \Delta}
      \left\{\alpha^{-2}r^2+y(r)^2\right\}
    \,d\alpha
    \le
    C_0\int_0^\Delta y(r)\,dr.
\end{equation}

For a fixed prior \(\pi\in\mathcal P(T)\), apply
\eqref{eq:penalized-correct} to
\[
    y(r)=\sqrt{R_\pi(r)},\qquad 0\le r\le \Delta.
\]
Since \(R_\pi(\Delta)=0\), this gives
\begin{equation}
\label{eq:RD-to-penalty}
    \int_0^\Delta \sqrt{R_\pi(r)}\,dr
    \ge
    C_0^{-1}
    \int_0^\infty
    \inf_{0\le r\le \Delta}
    \left\{\alpha^{-2}r^2+R_\pi(r)\right\}
    \,d\alpha.
\end{equation}
By the definition of \(R_\pi\), for every \(\alpha>0\),
\begin{equation}
\label{eq:penalty-coupling}
    \inf_{0\le r\le \Delta}
    \left\{\alpha^{-2}r^2+R_\pi(r)\right\}
    =
    \inf_{\substack{P_{X,X'}:\\ X\sim\pi,\ X'\sim\pi}}
    \left\{
        \alpha^{-2}\mathbb E d(X,X')^2+I(X;X')
    \right\}.
\end{equation}
Indeed, if a coupling has \(\mathbb E d(X,X')^2\le r^2\), then the right-hand
side is bounded above by \(\alpha^{-2}r^2+R_\pi(r)\); conversely, for any
coupling one may choose \(r=(\mathbb E d(X,X')^2)^{1/2}\).

For \(\alpha>0\), \(\mu\in\mathcal P(T)\), and \(x\in T\), define
\[
    \Psi_{\alpha,\mu}(x)
    :=
    \inf_{\nu\in\mathcal P(T)}
    \left\{
        \alpha^{-2}\mathbb E_{Y\sim\nu} d(x,Y)^2
        +
        D_{\mathrm{KL}}(\nu\|\mu)
    \right\}.
\]
Now, by the Gibbs variational formula \cite[Lemma 2.1.]{donsker1975asymptotic}, it is easy to check that for fixed \(\alpha\) and \(x\), the map
\(\mu\mapsto \Psi_{\alpha,\mu}(x)\) is convex. Let also \(K_x\) denote the conditional law of \(X'\) given \(X=x\). Then
\[
    I(X;X')
    =
    \mathbb E_{X\sim\pi}D_{\mathrm{KL}}(K_X\|\pi),
\]where $D_{\mathrm{KL}}$ is the Kullback-Leibler (KL) divergence.
Now, dropping the marginal constraint \(X'\sim\pi\) in
\eqref{eq:penalty-coupling}, we obtain
\begin{equation}
\label{eq:drop-marginal}
\begin{aligned}
&\inf_{\substack{P_{X,X'}:\\ X\sim\pi,\ X'\sim\pi}}
    \left\{
        \alpha^{-2}\mathbb E d(X,X')^2+I(X;X')
    \right\} \\
&\qquad\ge
    \mathbb E_{X\sim\pi}
    \inf_{\nu\in\mathcal P(T)}
    \left\{
        \alpha^{-2}\mathbb E_{Y\sim\nu}d(X,Y)^2
        +
        D_{\mathrm{KL}}(\nu\|\pi)
    \right\} \\
&\qquad=
    \mathbb E_{X\sim\pi}\Psi_{\alpha,\pi}(X).
\end{aligned}
\end{equation}

Therefore
\[
\begin{aligned}
&\sup_{\pi\in\mathcal P(T)}
    \int_0^\infty
    \inf_{0\le r\le \Delta}
    \left\{
        \alpha^{-2}r^2+R_\pi(r)
    \right\}
    d\alpha \\
    &\qquad \geq
    \sup_{\pi\in\mathcal P(T)}
    \sum_{x\in T}\pi(x)
    \int_0^\infty
    \Psi_{\alpha,\pi}(x)\,d\alpha\\
&\qquad\ge
    \sup_{\pi\in\mathcal P(T)}
    \inf_{\mu\in\mathcal P(T)}
    \sum_{x\in T}\pi(x)
    \int_0^\infty
    \Psi_{\alpha,\mu}(x)\,d\alpha.
\end{aligned}
\]
By Sion's minimax theorem, using the convexity in \(\mu\) and linearity in \(\pi\),
\begin{equation}
\label{eq:sion-correct}
\begin{aligned}
&\sup_{\pi\in\mathcal P(T)}
    \inf_{\mu\in\mathcal P(T)}
    \sum_{x\in T}\pi(x)
    \int_0^\infty
    \Psi_{\alpha,\mu}(x)\,d\alpha \\
&\qquad=
    \inf_{\mu\in\mathcal P(T)}
    \sup_{\pi\in\mathcal P(T)}
    \sum_{x\in T}\pi(x)
    \int_0^\infty
    \Psi_{\alpha,\mu}(x)\,d\alpha \\
&\qquad=
    \inf_{\mu\in\mathcal P(T)}
    \sup_{x\in T}
    \int_0^\infty
    \Psi_{\alpha,\mu}(x)\,d\alpha.
\end{aligned}
\end{equation}

It remains to lower bound the last display in terms of \(\mathcal{M}(T,d)\). Fix
\(x\in T\), \(\mu\in\mathcal P(T)\), and \(\alpha>0\). For \(0\le r\le\Delta\),
write
\[
    L_x(r):=\log\frac{1}{\mu(B(x,r))}.
\]
We claim that there exist universal constants \(L_0,c_1>0\) such
that
\begin{equation}
\label{eq:psi-ball-lower}
    \Psi_{\alpha,\mu}(x)
    \ge
    c_1
    \inf_{0\le r\le\Delta}
    \left\{
        \alpha^{-2}r^2+\bigl(L_x(r)-L_0\bigr)_+
    \right\}.
\end{equation}

To prove this, fix \(\nu\in\mathcal P(T)\) and set
\[
    m_2:=\mathbb E_{Y\sim\nu}d(x,Y)^2.
\]
Choose a fixed number \(a>1\), say \(a=2\), and put
\[
    \rho:=\min\{a\sqrt{m_2},\Delta\}.
\]
Then \(m_2\ge \rho^2/a^2\). Markov's inequality gives
\[
    \nu(B(x,\rho))\ge 1-a^{-2}.
\] Now it is easy to check the elementary bound for the binary relative entropy $d_{\mathrm{bin}}(p\|q),$ that for some constants $c_a,L_a>0$ if  $p\ge 1-a^{-2},$
\[
    d_{\mathrm{bin}}(p\|q)
    \ge
    c_a\left(\log\frac1q-L_a\right)_+.
\]Hence, by data processing,
\[
    D_{\mathrm{KL}}(\nu\|\mu)
    \ge
    c_a\bigl(L_x(\rho)-L_0\bigr)_+.
\]
Consequently,
\[
\begin{aligned}
    \alpha^{-2}m_2+D_{\mathrm{KL}}(\nu\|\mu)
    &\ge
    c_1
    \left\{
        \alpha^{-2}\rho^2+\bigl(L_x(\rho)-L_0\bigr)_+
    \right\} \\
    &\ge
    c_1
    \inf_{0\le r\le\Delta}
    \left\{
        \alpha^{-2}r^2+\bigl(L_x(r)-L_0\bigr)_+
    \right\}.
\end{aligned}
\]
Taking the infimum over \(\nu\) proves \eqref{eq:psi-ball-lower}.

Now apply the calculus lemma \eqref{eq:penalized-correct} to the non-increasing
function
\[
    y_x(r):=\sqrt{\bigl(L_x(r)-L_0\bigr)_+},
    \qquad 0\le r\le\Delta.
\]
Since \(L_x(\Delta)=0\), we have \(y_x(\Delta)=0\). Combining
\eqref{eq:psi-ball-lower} with the lower bound in
\eqref{eq:penalized-correct} gives
\[
\begin{aligned}
    \int_0^\infty \Psi_{\alpha,\mu}(x)\,d\alpha
    &\ge
    c_2
    \int_0^\Delta
    \sqrt{\bigl(L_x(r)-L_0\bigr)_+}\,dr \\
    &\ge
    c_2
    \int_0^\Delta
    \sqrt{L_x(r)}\,dr
    -
    C_2\Delta,
\end{aligned}
\]
where we used
\[
    \sqrt{(u-L_0)_+}\ge \sqrt{u}-\sqrt{L_0},
    \qquad u\ge0.
\]
Therefore
\[
\begin{aligned}
    \inf_{\mu\in\mathcal P(T)}
    \sup_{x\in T}
    \int_0^\infty \Psi_{\alpha,\mu}(x)\,d\alpha
    &\ge
    c_2
    \inf_{\mu\in\mathcal P(T)}
    \sup_{x\in T}
    \int_0^\Delta
    \sqrt{\log\frac1{\mu(B(x,r))}}\,dr
    -
    C_2\Delta \\
    &=
    c_2 \mathcal{M}(T,d)-C_2\Delta.
\end{aligned}
\]
Combining this estimate with
\eqref{eq:RD-to-penalty}, \eqref{eq:penalty-coupling},
\eqref{eq:drop-marginal}, and \eqref{eq:sion-correct}, we obtain
\[
    \sup_{\pi\in\mathcal P(T)}
    \int_0^\Delta \sqrt{R_\pi(r)}\,dr
    \ge
    c \mathcal{M}(T,d)-C\Delta
\]
for universal constants \(c,C>0\). This completes the proof.
\end{proof}

\section{Diameter lower bound on the MMSE area }
In this section, we include an auxiliary lemma relating for any Gaussian additive model the MMSE area and the diameter of the parameter space.
\begin{lemma}
\label{lem:mmse-area-diameter}
Let \(T\) be a finite subset of a Euclidean space $\mathbb{R}^N$. For a prior \(\pi\) on \(T\), let
\[
Y_s=sX+Z,\qquad X\sim \pi,\qquad Z\sim N(0,I_N).
\]

Then for some universal constant $c>0,$
\[
\sup_{\pi}
\int_0^\infty \operatorname{MMSE}_\pi(s)\,ds
\ge
c\operatorname{diam}(T).
\]

\end{lemma}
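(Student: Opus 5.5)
Choose the prior to be a two-point prior at the endpoints of a diameter pair, and reduce the integrated MMSE to an explicit one-dimensional computation.

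\emph{Choice of prior.} Since $T$ is finite, pick $a,b\in T$ with $\|a-b\|_2=D:=\operatorname{diam}(T)$, and let $\pi$ be uniform on $\{a,b\}$. It suffices to show $\int_0^\infty \operatorname{MMSE}_\pi(s)\,ds\ge cD$ for this $\pi$; the supremum over priors then only helps. If $D=0$ there is nothing to prove.

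\emph{Reduction to one dimension.} Write $X=\frac{a+b}{2}+\epsilon\frac{b-a}{2}$ with $\epsilon$ uniform on $\{\pm1\}$. The posterior of $X$ given $Y_s$ depends only on the projection of $Y_s$ onto the direction $e=(b-a)/D$. Setting $W=\langle Y_s-s\tfrac{a+b}{2},e\rangle$, we get $W=s\epsilon D/2+g$ with $g\sim N(0,1)$, and the orthogonal part of $Y_s$ is independent noise. Hence
\[
\operatorname{MMSE}_\pi(s)=\frac{D^2}{4}\,\E\bigl(\epsilon-\E[\epsilon\mid W]\bigr)^2=\frac{D^2}{4}\,m(sD/2),
\]
where $m(\lambda)$ is the MMSE of estimating a Rademacher variable $\epsilon$ from $\lambda\epsilon+g$.

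\emph{Integration.} Substituting $\lambda=sD/2$, so that $ds=2\,d\lambda/D$, gives
\[
\int_0^\infty\operatorname{MMSE}_\pi(s)\,ds=\frac{D}{2}\int_0^\infty m(\lambda)\,d\lambda .
\]
It remains to check that $\kappa:=\int_0^\infty m(\lambda)\,d\lambda$ is a positive finite constant; only positivity is actually needed. Since $m$ is continuous with $m(0)=1$, it is at least $1/2$ on some interval $[0,\lambda_0]$, so $\kappa>0$.

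For an explicit bound, use that the MMSE is at least half the error probability of the MAP (sign) rule, or at least the Bayes risk computed via the posterior $\E[\epsilon\mid W]=\tanh(\lambda W)$. An even simpler route is to compare with the estimator $0$: by the I-MMSE relation in $\lambda$-parametrization, $m$ is nonincreasing, and a crude estimate such as $m(\lambda)\ge \PP(\text{sign error})=\PP(g>\lambda)$ shows $m(\lambda)\ge 1/4$ on $[0,0.6]$, say.

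This gives $c=\kappa/2$. There is no real obstacle; the only care needed is justifying the one-dimensional reduction via sufficiency of the projection and the lower bound on $m$ near $0$.
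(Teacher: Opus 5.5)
Your proof is correct and follows essentially the same route as the paper: a uniform two-point prior on a diameter pair, a reduction to the scalar Rademacher channel giving $\operatorname{MMSE}_\pi(s)=\frac{D^2}{4}m(sD/2)$, and a lower bound on $m$ at low SNR (you use continuity with $m(0)=1$, or $m(\lambda)\ge\PP(g>\lambda)$, while the paper uses $m(\alpha)\ge\operatorname{sech}^2(1)\,\PP(\alpha|Y|\le 1)$ on $[0,1]$). One small aside: comparing with the estimator $0$ only gives the upper bound $m\le 1$, and you do not need monotonicity of $m$ because $\PP(g>\lambda)$ is already decreasing in $\lambda$, so that sentence can simply be dropped.
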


\begin{proof}
Fix two points \(x_0,x_1\in T\), and put
\[
\delta:=\|x_1-x_0\|_2.
\]
We will show that for $\pi$ the uniform prior on \(\{x_0,x_1\}\) we have for some universal constant $c>0,$
\[
\int_0^\infty \operatorname{MMSE}_\pi(s)\,ds
\ge
c\delta.
\]

By translation and rotation, we can assume
\[
x_0=-\frac{\delta}{2}e_1,
\qquad
x_1=\frac{\delta}{2}e_1,
\]
where \(e_1\) is the first coordinate vector. Hence, we have
$X=\frac{\delta}{2}B e_1,$ for
$B\sim \mathrm{Unif}(\{-1,+1\}).$  Thus the problem reduces to the one-dimensional Gaussian channel
\[
Y=\alpha B+N,
\qquad
N\sim N(0,1),
\qquad
\alpha=\frac{s\delta}{2}.
\]
Consider the one-dimensional
\[
m_B(\alpha)
:=
\mathbb E\bigl[(B-\mathbb E[B\mid Y])^2\bigr].
\]and then we have
\[
\operatorname{MMSE}_\pi(s)
=
\frac{\delta^2}{4}\,m_B\!\left(\frac{s\delta}{2}\right).
\]

But direct calculations give that $\mathbb E[B\mid Y]=\tanh(\alpha Y),$
and therefore
\[
m_B(\alpha)
=
\mathbb E\bigl[1-\tanh^2(\alpha Y)\bigr]
=
\mathbb E\bigl[\operatorname{sech}^2(\alpha Y)\bigr] \geq \operatorname{sech}^2(1) \PP(\alpha|Y| \leq 1).
\]
But for \(0\le\alpha\le1\), 
\[
\mathbb P(|Y|\le1)
=
\mathbb P(|N+\alpha|\le1)
\ge
\mathbb P(-2\le N\le 0),
\]so $m_B(\alpha) \geq \operatorname{sech}^2(1) \mathbb P(-2\le N\le 0).$

Therefore for some universal constant $c>0$, for all $0 \leq \alpha \leq 1$,
\[
\begin{aligned}
\int_0^\infty \operatorname{MMSE}_\pi(s)\,ds
&\ge
\int_0^{2/\delta}
\frac{\delta^2}{4}
m_B\!\left(\frac{s\delta}{2}\right)\,ds \\
&\ge
c\int_0^{2/\delta}
\frac{\delta^2}{4}\,ds \\
&=
c\frac{\delta}{2}.
\end{aligned}
\]

\end{proof}
\end{document}